\definecolor{LightGray}{rgb}{0.7,0.7,0.7}
\newtheorem{theorem}{Theorem}
\newtheorem{definition}{Definition}
\newcommand{\figref}[1]{Figure~\ref{#1}}		
\newcommand{\thmref}[1]{Theorem~\ref{#1}}
\newcommand{\st}[1]{\, |\, \text{#1} }	
\newcommand{\set}[1]{\left\{#1\right\}}
\newcommand{\cS}{{\cal S}}
\newcommand{\Ga}{\Gamma}
\newcommand{\ka}{\kappa}
\newcommand{\Rgf}[2]{R^{(#1,#2)}}	
\newcommand{\Lgf}[2]{L^{(#1,#2)}}	
\newcommand{\reslukapart}[2]{Z^{(#1,#2)}}	
\renewcommand{\H}[2]{H^{(#1,#2)}}	
\renewcommand{\c}[2]{c^{(#1,#2)}}	
\newcommand{\crit}{A}	
\DeclareMathOperator{\result}{Res}
\DeclareMathOperator{\rightViz}{RightViz}	
\begin{document} 
\title{An infinite family of adsorption models and restricted Lukasiewicz paths}

\author{R.~Brak$^1$, G.K~Iliev$^1$, T.~Prellberg$^2$,\\
$^1$Department of Mathematics and Statistics,\\
	The University of Melbourne,\\
	Parkville, Victoria 3010, Australia\\
	$^2$ School of Mathematical Sciences,\\ 
	Queen Mary University of London,\\
	Mile End Road, London E1 4NS, UK.
}
\date{\today}
\maketitle	
\begin{abstract}

We define $(k,\ell)$-restricted Lukasiewicz paths, $k\le\ell\in\mathbb{N}_0$, and use
these paths as models of polymer adsorption.  We write down a polynomial expression 
satisfied by the generating function for arbitrary values of $(k,\ell)$.  The resulting 
polynomial is of degree $\ell+1$ and hence cannot be solved explicitly for sufficiently 
large $\ell$.  We provide two different approaches to obtain the phase diagram. In addition
to a more conventional analysis, we also develop a new mathematical characterization of the 
phase diagram in terms of the discriminant of the polynomial and a zero of its highest 
degree coefficient.

We then give a bijection between $(k,\ell)$-restricted Lukasiewicz paths and 
``rise''-restricted Dyck paths, identifying another family of path models which 
share the same critical behaviour. For $(k,\ell)=(1,\infty)$ we provide a 
new bijection to Motzkin paths.

We also consider the area-weighted generating function and show that it is 
a $q$-deformed algebraic function. We determine the generating function 
explicitly in particular cases of $(k,\ell)$-restricted Lukasiewicz paths, and
for $(k,\ell)=(0,\infty)$ we provide a bijection to Dyck paths.

\end{abstract}

\vfill
\textsc{Short Title:}  An infinite family of adsorption models.

\textsc{PACS:} 02.10.Ox

\textsc{Keywords: }Polymer adsorption, lattice path, Lukasiewicz path, Dyck path, Motzkin path.

\newpage	

\begin{center}
This paper is dedicated to Cyril Domb on the occasion of his 90th birthday.
\end{center} 

\section{Introduction and definitions} 

The study of the statistical mechanics of polymers has been a topic of much interest for
 nearly 70 years, with a great deal of focus devoted to systems of long, linear molecules in 
a good solvent  \cite{dimarzio:1971ev,rubin:1965ve}.  The class of simple models known as directed paths
have received much attention when studying the behaviour of such molecules in the presence of an impenetrable surface  \cite{forgacs:1991fj,orlandini:2004kx,whittington:1998lr}.

In this paper we propose a new discrete  two-parameter family of directed path models, 
\emph{$(k,\ell)$-restricted Lukasiewicz paths}, with $k\le\ell\in \mathbb{N}_0$, that have a 
tuneable step set.
This family of paths is a generalization of well-known directed path models \cite{brak:2007xe,prellberg:1995xg,prellberg94c} and
for some particular choices of the parameters leads to several interesting bijections to 
classical directed path models.

In addition to defining the paths, we use them to study the problem of adsorbing polymers 
at an impenetrable surface.  The generating function for each model satisfies a polynomial equation
of degree $\ell$+1, and hence cannot be solved explicitly for large $\ell$.  We present a new method
of extracting the phase diagram from these polynomials.  The phase diagram for each member of the
family is composed of two regimes: 1) a regime independent of the 
contact parameter and 2) a regime which depends explicitly on the contact parameter.  
We identify the first regime by evaluating a discriminant, while the second regime is 
the physically relevant solution of a high-degree algebraic equation.  We use \emph{PGL$(2)$-invariance}  
of the discriminant to show that  Regime 1 is indeed independent of the contact parameter for
all values of $(k,\ell)$. 

We also give a weight preserving bijection from the  $(k,\ell)$-restricted Lukasiewicz paths to $(k+1,\ell+1)$-\emph{rise restricted Dyck paths} and hence give another family of models which have the same phase digram as the Lukasiewicz models. 
The set of all these models is conveniently illustrated in \figref{fig_twoparam} which shows the location of Dyck and Motzkin paths and of  two additional bijections discussed in this paper.  

\begin{figure}[htb] 
	\centering
 		\includegraphics[scale=1.2]{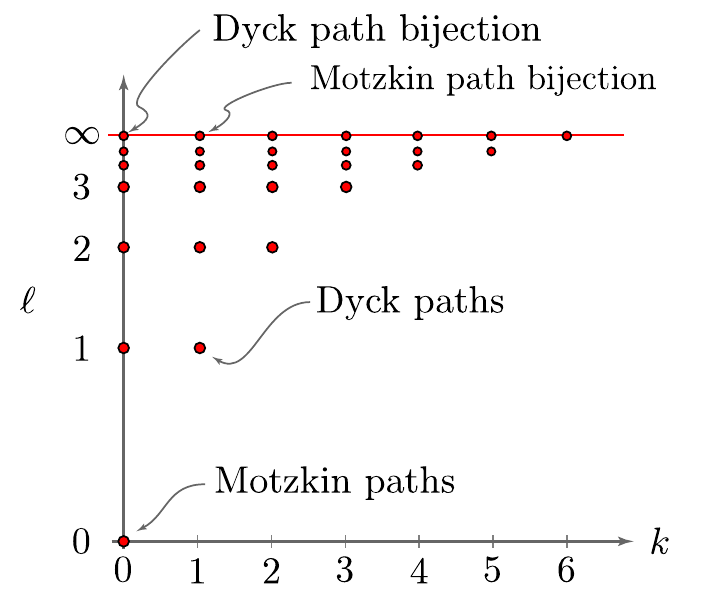}
	\caption{An illustration of the discrete two parameter family of adsorption models given by $(k,\ell)$-restricted Lukasiewicz paths. The points $(1,1)$ and $(0,1)$ correspond to Dyck paths and Motzkin paths, respectively. At the points $(0,\infty)$ and $(1,\infty)$ there are also bijections to Dyck and Motzkin paths, as indicated in the figure.
}
	\label{fig_twoparam}
\end{figure}

Finally, we find an equation satisfied by the area-weighted generating function  of $(k,\ell)$-restricted Lukasiewicz paths which we solve in two cases, namely $(k,k)$ and $(0,\infty)$. The latter solution can also be obtained via a bijection to Dyck paths which we present. The area-weighted generating function gives a simple model of a single membrane vesicle above a surface with adsorption. The area generating variable corresponds to a   `volume' fugacity.

\begin{definition}\label{def_paths}
Let $\mathbb{N}_0=\set{0,1,2,3,\cdots}$. 
A \textbf{length} $n$ \textbf{direct path} is a sequence of vertices $ v_0 v_1\dots v_{n}$ with  $v_i=(x_i,y_i)\in \mathbb{N}_0\times \mathbb{N}_0$, $v_0=(0,0)$ and  $v_{n}=(n,0)$,
where the \textbf{steps}, $v_i-v_{i-1}$, belong to a given \textbf{step set} $\cS\subseteq\{1\}\times\mathbb{Z}$. Choosing 
\[
	\cS=\set{(1,-1)}\cup\set{(1,j)\st{  $k\le j\le \ell$, and $k,\ell\in  \mathbb{N}_0$}   }
\] 
defines \textbf{$(k,\ell)$-restricted Lukasiewicz paths}.
A step $(1,j)$ is called a \textbf{jump $j$ step}.The \textbf{height} of a vertex $v_i=(x_i,y_i)$ is  $y_i$ and the height of a step is the height of its first (\emph{i.e.}\ left) vertex.  A \textbf{contact} weight, $a$, is associated with any vertex $v_1,\ldots v_n$ of height zero.
\end{definition}	
Note, contact weights are only associated with returns to the surface \emph{i.e.}\ $v_0$ does not contribute a contact weight.

Clearly Dyck paths are $(1,1)$-restricted Lukasiewicz paths, while Motzkin paths are $(0,1)$-restricted Lukasiewicz path. An example of a $(1,2)$-restricted Lukasiewicz path is shown in Figure \ref{fig:figs_riseEx}.
	
\begin{figure}[htb]
	\centering
		\includegraphics[width=0.8\textwidth]{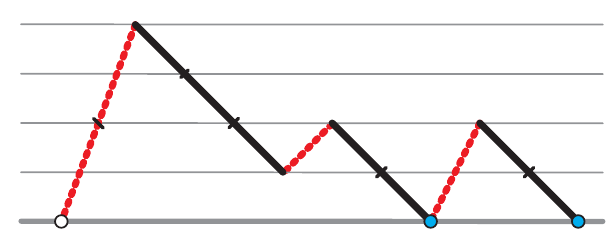}
	\caption{An example of a $(1,2)$-restricted Lukasiewicz path of length eleven with two contacts. The jump  steps are shown in colour (dashed).
	}  
	\label{fig:figs_riseEx}
\end{figure}

\section{Contact polynomials for Restricted Lukasiewicz paths} 
\label{sec:contact_polynomials_for_restricted_lukasiewicz_paths}

We now consider the partition functions and their associated generating functions. Let $\Rgf{k}{\ell}(z;a)$ be the generating function for $(k,\ell)$-restricted Lukasiewicz paths    with partition functions $\reslukapart{k}{\ell}_n(a)$, namely,
\begin{equation}
	\Rgf{k}{\ell}(z;a)=\sum_{n\ge0} \reslukapart{k}{\ell}_n(a) z^n.
\end{equation}
The following theorem gives the algebraic equation satisfied by $\Rgf{k}{\ell}(z;a)$ in terms of $\Lgf{k}{\ell}(z)=\Rgf{k}{\ell}(z;1)$.
\begin{theorem}\label{thm_lukaeqs}

The generating function $\Rgf{k}{\ell}(z;a)$ is given by the following pair of algebraic equations
\begin{align}
	\Rgf{k}{\ell}(z;a)=1+ az  \sum_{j=k}^\ell \left(z \Lgf{k}{\ell}(z)\right)^j \, \Rgf{k}{\ell}(z;a)\label{eq_rgf}\\
	\Lgf{k}{\ell}(z)=1+ \sum_{j=k}^{\ell} \left(z \Lgf{k}{\ell}(z)\right)^{j+1}\label{eq_lgf}\;.
\end{align}
\end{theorem}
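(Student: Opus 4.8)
The plan is to derive both identities from two classical combinatorial decompositions of the paths, exploiting the fact that in the step set $\cS$ the only step that lowers the height is $(1,-1)$, which decreases it by exactly one. This ``descend-by-one'' property forces a path that drops from height $j$ to height $0$ to cross each intermediate level, so that first-passage times to the levels $j-1,j-2,\dots,0$ are well defined and totally ordered; this is what makes the descent decomposition reversible. I would first establish \eqref{eq_lgf} for $\Lgf{k}{\ell}(z)=\Rgf{k}{\ell}(z;1)$ (paths weighted by length only), and then obtain \eqref{eq_rgf} by re-running the same decomposition while bookkeeping the contact weights $a$.

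For the $\Lgf{k}{\ell}$ identity, the empty path accounts for the summand $1$. A nonempty path cannot begin with $(1,-1)$, since at height $0$ this would leave $\mathbb{N}_0\times\mathbb{N}_0$, so its first step is a jump $j$ step for some $k\le j\le\ell$, contributing a factor $z$ and raising the path to height $j$. I would then cut the remainder at its first passages to heights $j-1,\dots,0$. Each of the resulting $j$ descent blocks consists of an excursion that stays at or above the current level (a shifted $(k,\ell)$-restricted Lukasiewicz path, hence generating function $\Lgf{k}{\ell}$) followed by the $(1,-1)$ step that drops one level (a factor $z$); after the first arrival at height $0$ an arbitrary $(k,\ell)$-path remains, contributing a further $\Lgf{k}{\ell}$. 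Multiplying the initial $z$, the $j$ descent factors $z\Lgf{k}{\ell}$, and the terminal $\Lgf{k}{\ell}$ gives $(z\Lgf{k}{\ell})^{j+1}$, and summing over $j$ yields \eqref{eq_lgf}.

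For \eqref{eq_rgf} I would instead peel off the first return to the surface. Every nonempty path factors uniquely as an \emph{irreducible} piece (touching height $0$ only at its endpoints) followed by an arbitrary $(k,\ell)$-path, the latter contributing $\Rgf{k}{\ell}(z;a)$. The crucial bookkeeping point is that the irreducible piece is exactly the descent decomposition above with the terminal remainder omitted: its $j$ excursions all sit at heights $\ge 1$, so they contain no surface contacts and are weighted by $\Lgf{k}{\ell}$ (the $a=1$ generating function), while its single return to height $0$ at the very end supplies one factor $a$. Thus the irreducible piece starting with a jump $j$ step has generating function $a\,z\,(z\Lgf{k}{\ell})^{j}$, and summing over $j$ and appending $\Rgf{k}{\ell}(z;a)$ gives \eqref{eq_rgf}. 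As a consistency check, setting $a=1$ collapses \eqref{eq_rgf} into \eqref{eq_lgf}, since $z\,(z\Lgf{k}{\ell})^{j}\,\Lgf{k}{\ell}=(z\Lgf{k}{\ell})^{j+1}$.

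The main obstacle is making the descent decomposition rigorous as a weight-preserving bijection and getting the ``$\Lgf{k}{\ell}$ versus $\Rgf{k}{\ell}$'' distinction exactly right. I would verify that before its first passage to height $j-1$ the path stays at heights $\ge j$ and terminates that block with a forced $(1,-1)$ step, so the block is genuinely (excursion)$\times$(down step) and the cut points are uniquely recoverable; iterating gives the claimed factorization. The one genuinely delicate accounting is that excursions strictly above the surface must be counted with $a=1$, so that the contact weight is attached only to the true returns $v_1,\dots,v_n$ of height $0$, in accordance with Definition~\ref{def_paths}; this is precisely why $\Lgf{k}{\ell}$, and not $\Rgf{k}{\ell}$, appears inside the sums.
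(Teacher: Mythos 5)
Your proposal is correct and follows essentially the same route as the paper: both partition a nonempty path by the jump height $j$ of its first (leftmost) jump step, factor the descent back to the surface into $j$ blocks of (excursion above the current level, weight $\Lgf{k}{\ell}$) followed by a forced down step, and append either an arbitrary $\Lgf{k}{\ell}$- or $\Rgf{k}{\ell}$-counted path after the first return, with the contact weight $a$ attached only to that return. The paper gives only an outline of this factorization; your write-up supplies the same argument in more detail, including the correct placement of $\Lgf{k}{\ell}$ versus $\Rgf{k}{\ell}$.
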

Since the derivation of these equations is a direct generalization of known methods \cite{brak:2007xe,prellberg:1995xg,prellberg94c} we provide only an outline of
the proof.

\begin{proof}[Outline]\label{pf_lk1}

We note that these equations arise by partitioning the set of all Lukasiewicz
paths weighted by contacts according to the height of the leftmost jump step. After jumping to height
$j$ the path must take $j$ down steps in order to return to the surface. 
Between each of these down steps  a sequence of steps  all above a fixed height,  with corresponding generating function $\Lgf{k}{\ell}(z)$,  are permitted.  
These sequences are denoted schematically in the  \figref{fig_lukafact} by a `loop'.
After returning to the surface the paths can finish with the corresponding Lukasiewicz path. This     
factorization argument is illustrated schematically in \figref{fig_lukafact}.
Note that the jump step gets a weight $z$ irrespective of the jump height.
\end{proof}

\begin{figure}[htb]
	\centering
 		\includegraphics[scale=1.2]{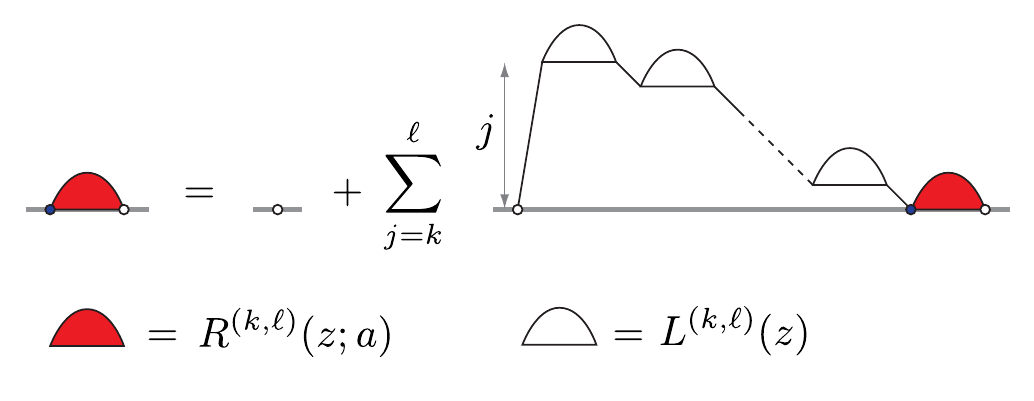}
	\caption{Schematic representation of the   Lukasiewicz path factorisations.}
	\label{fig_lukafact}
\end{figure}

We remark that by solving for the summation in \eqref{eq_lgf} and using it in \eqref{eq_rgf}, it follows that 
$\Rgf{k}{\ell}(z;a)$ and  $\Lgf{k}{\ell}(z)$ are related   by the simple equation
\begin{equation}
\Lgf{k}{\ell}(z) =\frac{a 	\Rgf{k}{\ell}(z;a)}{1+(a-1)	\Rgf{k}{\ell}(z;a) } \;.\label{simple2}
\end{equation}
Substituting \eqref{simple2} into \eqref{eq_rgf} shows that $	\Rgf{k}{\ell}(z;a)$ satisfies a degree $\ell+1$ polynomial.

\subsection{Singular Behaviour and Free Energy} 
The free energy $\ka_{k,\ell}(a)$ for each model is related to the radius of convergence $z_c(a)$ of  the generating function $\Rgf{k}{\ell}(z)$ by,
\[
	\ka_{k,\ell}(a)=-\log z_c(a)\;.
\] 
We deduce the radius of convergence of $\Rgf{k}{\ell}(z)$ in two ways, i) from the singular behaviour of  $\Lgf{k}{\ell}(z)$ and its
relationship to $\Rgf{k}{\ell}(z)$, namely equation \eqref{simple2}, and ii) a more direct approach which gives the 
singularity structure of $\Rgf{k}{\ell}(z)$  without first finding that of $\Lgf{k}{\ell}(z)$.  The first method gives the phase boundary of each 
model in terms of a  unique positive root of a certain polynomial. The second approach yields a more general result that the high 
temperature regime of the phase boundary arises from the root of the ``indicial'' equation and the low temperature regime  from the discriminant 
of the polynomial satisfied by $\Rgf{k}{\ell}(z)$ -- see \figref{fig_genfe} below.  

\subsubsection{Radius of convergence of $\Rgf{k}{\ell}$ from $\Lgf{k}{\ell}(z)$.}

The following theorem states the radius of convergence of $\Rgf{k}{\ell}(z)$, and  hence the phase boundary of the models, all in terms of the unique positive root of a certain degree $\ell$ polynomial. The proof uses the singular behaviour of  $\Lgf{k}{\ell}(z)$ and its rational relation to $\Rgf{k}{\ell}(z)$.  

\begin{theorem}\label{thm_sing} 
Let $u_c$ be the unique positive real root of the polynomial
\begin{equation}
		 \sum_{j=k}^\ell j u^{j+1}=1
\end{equation}
and
\begin{align}
	 z_c=&\frac{u_c}{1+\Gamma_{k,\ell}(u_c)}\;,\\
	a_c=&1+\frac{1}{\Gamma_{k,\ell}(u_c)}\;,
	\end{align}
	where 
	\begin{equation}\label{eq_gampoly}
			\Gamma_{k,\ell}(u) =\sum_{j=k}^\ell u^{j+1}\;.
	\end{equation}
Then the  radius of convergence  $z_c(a)$  of  $\Rgf{k}{\ell}(z;a)$ for $a\ge1$ is  
\begin{equation}
		z_c(a)=\begin{cases}
			z_c & \text{if $1\le a\le a_c$}\\
		z^+_c(a)	& \text{if $a>a_c$}
		\end{cases}
\end{equation}
where  $z^+_c(a)$ is the unique positive real root of 
\begin{equation}\label{eq_zplus}
	\frac1{a-1} = \Gamma_{k,\ell}\left(\frac{az}{a-1}\right)\;.
\end{equation}
\end{theorem}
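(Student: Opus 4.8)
The plan is to determine the dominant singularity of $\Lgf{k}{\ell}(z)$ by a standard Lagrangian parametrization, and then transport this information to $\Rgf{k}{\ell}(z;a)$ through the rational relation \eqref{simple2}. First I would introduce the variable $u = z\Lgf{k}{\ell}(z)$. Then \eqref{eq_lgf} becomes $\Lgf{k}{\ell} = 1 + \Gamma_{k,\ell}(u)$, and since $z = u/\Lgf{k}{\ell}$, one obtains the explicit parametrization $z(u) = u/(1+\Gamma_{k,\ell}(u))$. Differentiating, the numerator of $z'(u)$ is $1+\Gamma_{k,\ell}(u) - u\,\Gamma_{k,\ell}'(u) = 1 - \sum_{j=k}^\ell j\,u^{j+1}$. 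Because $\sum_{j=k}^\ell j\,u^{j+1}$ has non-negative coefficients and is strictly increasing from $0$ to $\infty$ on $[0,\infty)$, it equals $1$ at a unique positive value, which is precisely the root $u_c$ in the statement. Hence $z(u)$ increases on $[0,u_c]$ to its maximum $z_c = z(u_c) = u_c/(1+\Gamma_{k,\ell}(u_c))$.

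Next I would argue that $z_c$ is the radius of convergence of $\Lgf{k}{\ell}(z)$. Since $z'(u)>0$ on $[0,u_c)$, the analytic inverse function theorem gives an analytic inverse $u(z)$, and therefore $\Lgf{k}{\ell}(z) = 1+\Gamma_{k,\ell}(u(z))$ is analytic for $|z|<z_c$; at $z_c$ the vanishing of $z'(u_c)$ together with $z''(u_c)\neq 0$ produces a square-root branch point. As $\Lgf{k}{\ell}$ has non-negative coefficients, Pringsheim's theorem places the dominant singularity on the positive real axis, so this branch point is indeed the radius of convergence. The \textbf{main obstacle} of the whole argument lives here: one must rigorously verify that $z_c$ is the dominant singularity (no singularities of equal or smaller modulus elsewhere) and that the branch point is a genuine square-root singularity; I would discharge this with a Drmota--Lalley--Woods type smooth-point analysis for the positive algebraic system \eqref{eq_lgf}.

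With the behaviour of $\Lgf{k}{\ell}$ in hand, I would rewrite \eqref{simple2} as $\Rgf{k}{\ell}(z;a) = \Lgf{k}{\ell}(z)/\bigl(a-(a-1)\Lgf{k}{\ell}(z)\bigr)$. On $[0,z_c]$ the only possible singularities of $\Rgf{k}{\ell}$ are then (i) the inherited branch point of $\Lgf{k}{\ell}$ at $z_c$, and (ii) a simple pole where the denominator vanishes, i.e.\ where $\Lgf{k}{\ell}(z) = a/(a-1)$. Because $\Lgf{k}{\ell}$ increases continuously from $1$ at $z=0$ to $1+\Gamma_{k,\ell}(u_c)$ at $z=z_c$, such a pole at some $z_p<z_c$ exists if and only if $a/(a-1) < 1+\Gamma_{k,\ell}(u_c)$, which rearranges to $a>a_c$ with $a_c = 1+1/\Gamma_{k,\ell}(u_c)$ exactly as stated. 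For $1\le a\le a_c$ the denominator stays nonzero on $|z|<z_c$, so the dominant singularity is the branch point and $z_c(a)=z_c$; for $a>a_c$ the pole precedes the branch point and dominates.

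Finally I would locate the pole and match it to \eqref{eq_zplus}. At $z_p$ one has $\Lgf{k}{\ell}(z_p)=a/(a-1)$, hence $\Gamma_{k,\ell}(u_p) = a/(a-1)-1 = 1/(a-1)$, while $u_p = z_p\,\Lgf{k}{\ell}(z_p) = a z_p/(a-1)$. Substituting gives $\Gamma_{k,\ell}\!\bigl(a z_p/(a-1)\bigr) = 1/(a-1)$, which is precisely \eqref{eq_zplus}, so $z_p = z^+_c(a)$; monotonicity of $z\mapsto \Gamma_{k,\ell}(az/(a-1))$ yields uniqueness of the positive root. At $a=a_c$ the pole merges with the branch point ($u_p=u_c$, $z_p=z_c$), so the two cases agree on the boundary. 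Since Pringsheim again forces the dominant singularity onto the positive axis, in each regime the radius of convergence is the smaller of the two candidate real singularities, giving exactly the claimed formula for $z_c(a)$.
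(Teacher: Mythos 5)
Your proposal is correct and follows essentially the same route as the paper: both parametrize via $u=zL$ to locate the square-root singularity of $\Lgf{k}{\ell}$ at the unique positive root of $\sum j u^{j+1}=1$, then transfer to $\Rgf{k}{\ell}$ through the rational relation \eqref{simple2} and identify the competing pole at $L=a/(a-1)$. Your treatment is if anything slightly more explicit than the paper's on the monotonicity argument determining when the pole precedes the branch point, and on the need to confirm dominance of the positive-axis singularity.
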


\begin{proof}\label{pf:}

To reduce the notational clutter in the proof we use
\begin{align*}
	R=\Rgf{k}{\ell}(z;a)\\
	L=\Lgf{k}{\ell}(z).
\end{align*}
From \thmref{thm_lukaeqs} we have   
\begin{equation}
\label{L}
L=1+\sum_{j=k}^\ell(zL)^{j+1} = 1+\Gamma_{k,\ell}(zL)
\end{equation}
and
\begin{equation}
\label{LR}
L=\frac{aR}{1+(a-1)R}.
\end{equation}
Since $L(z)$ is a generating function (with positive coefficients), for $z\ge0$ it is an increasing function of $z$. From \eqref{L} it follows that $L(0)=1$ and $L(z)$ has a singularity at $z_c \leq 1$ on the positive real axis. 

From \eqref{L} we compute the derivative as
\begin{equation}
\label{Lprime}
L'=\frac{L\sum\limits_{j=k}^\ell(j+1)(zL)^j }{1-z\sum\limits_{j=k}^\ell(j+1)(zL)^j }\;.
\end{equation}
At $z_c$, $L'$ must diverge, i.e. the denominator vanishes. This implies that at $z_c$
\begin{equation}
\label{Lprimeinf}
z_c \sum_{j=k}^\ell(j+1)(z_cL)^j =1\;.
\end{equation}
Therefore $L$ has a finite value $L_c$ at $z_c$ (unless $k=\ell=0$). 

Combining (\ref{L}) and (\ref{Lprimeinf}) gives
\begin{equation}
\label{u}
\sum_{j=k}^\ell ju^{j+1}=1\;.
\end{equation}
with $u=zL$, which clearly  has a unique positive solution, denoted  $u_c$.
We can now express both $z_c$ and $L_c$ in terms of $u_c$ as
\begin{equation}
\label{Lcrit}
L_c=1+\Gamma_{k,\ell}(u_c)
\end{equation}
and
\begin{equation}
\label{zcrit}
z_c=\frac{u_c}{1+\Gamma_{k,\ell}(u_c)}\;.
\end{equation}

We now turn our attention to the singularity structure of $R$. Inverting (\ref{LR}) gives
\begin{equation}
\label{RL}
R=\frac{L}{a-(a-1)L}\;.
\end{equation}
Clearly $R$ is singular at $z_c$. This singularity can be dominated by a second singularity $z^+$ arising from a vanishing denominator, i.e.
if
\begin{equation}
\label{asing}
L=\frac a{a-1}
\end{equation}
and thus, using \eqref{L}, we get that $z^+$ is given by the unique positive root of
\begin{equation}\label{eq_indeq}
\frac 1{a-1} = 1+ \Gamma_{k,\ell}\left(\frac {az}{a-1}\right)	\;.
\end{equation}
Rearranging the above equation gives \eqref{eq_zplus}. By choosing $a$ arbitrarily large, \eqref{eq_indeq} implies that $z^+$ can be arbitrarily small. If $a$ is sufficiently large this will be the the closest singularity to the origin, \emph{i.e.}\ $0<z^+\le z_c$.   At $z_c$ both singularities 
coincide, and the critical value $a_c$ is determined by
\begin{equation}\label{eq_aceq}
a_c=1+\frac{1}{\Gamma_{k,\ell}(u_c)}\;.
\end{equation}
\end{proof}

As an example, a plot of $a_c $ is shown for   $k=1$ and $\ell=1,2,3,\dots,8$   in \figref{fig_acseries}. 

\begin{figure}[htb] 
	\centering
 		\includegraphics[scale=1.2]{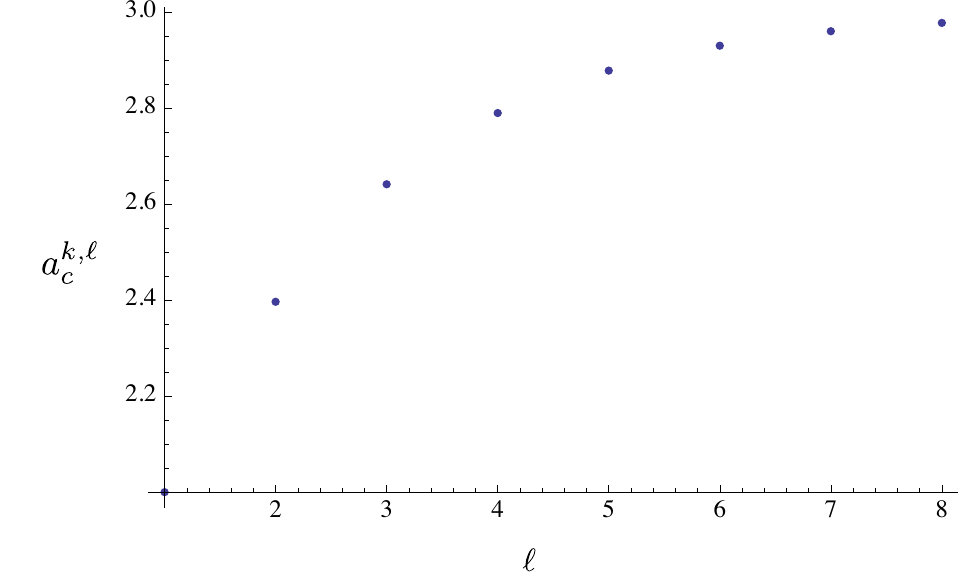}
	\caption{A plot of $a_c^{(k,\ell)}$   for   $k=1$ and $\ell=1,2,3,\dots,8$. The point $(1,1)$ is Dyck paths, where $a_c=2$, and the limiting value $\left.{a_c}\right|_{k=1,\ell=\infty}=3$ is the Motzkin path bijection point. The latter bijection  point has  non-standard surface weights (\emph{i.e.}\ the bijection is not weight preserving) and hence \mbox{$a_c\ne 3/2$}.
}
	\label{fig_acseries}
\end{figure}

\subsubsection{Singular behaviour of $\Rgf{k}{\ell}$ from its algebraic structure.}
\label{sec:singular_behavious}

In this section we focus on the fact that the generating function $\Rgf{k}{\ell}(z)$ is an algebraic function, \emph{i.e.}\ it is the root of   a degree $\ell+1$ polynomial  of the form
\begin{equation}\label{eq_Rpoly}
	p_{k,\ell+1}(z,a) R^{\ell+1}+p_{k,\ell}(z,a) R^{\ell}+\dots+p_{k,0}(z,a)=0.
\end{equation} 
 We show that in one regime the free energy arises from the 
discriminant of \eqref{eq_Rpoly},   whilst in another regime the free energy arises from the zeros of $p_{k,\ell+1}$. This generic structure is shown schematically in  \figref{fig_genfe}.
\begin{figure}[htb] 
	\centering
 		\includegraphics[scale=1.2]{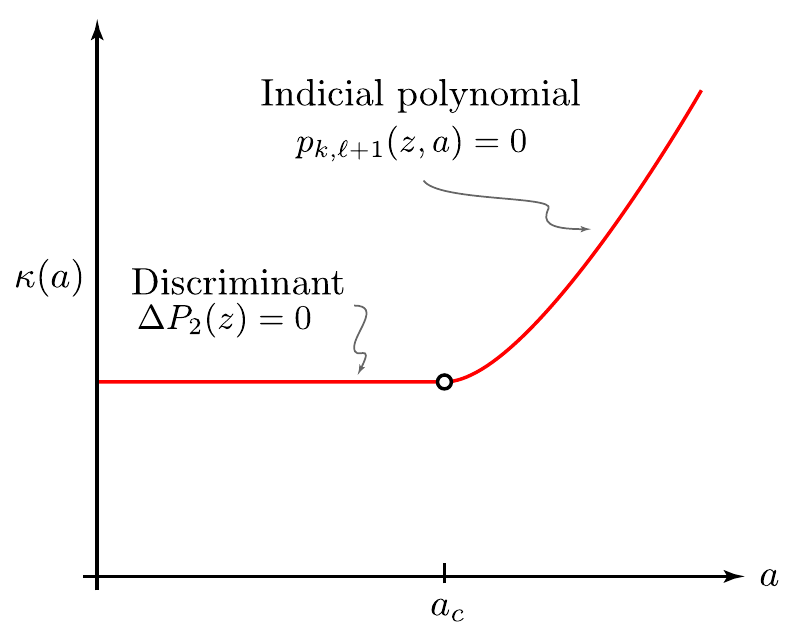}
	\caption{Adsorption free energy as determined by the discriminant and indicial equation.}
	\label{fig_genfe}
\end{figure}

Since $R=\Rgf{k}{\ell}(z;a)$ satisfies a degree $\ell+1$ algebraic equation,
there are at most $\ell+1$ solutions, $ \Rgf{k}{\ell}_i(z;a)$, one of which will correspond to the generating function.    
For finite $z$ and $a$ considered  a 
parameter there are only two sources of non-analyticity in  $\Rgf{k}{\ell}(z;a)$ -- see Theorem 12.2.1 of Hille \cite{hille:1972qy}. Either i)   
$\Rgf{k}{\ell}(z;a)\to\pm\infty$ as $z\to z_c(a)$ or ii) the branch structure of  $\Rgf{k}{\ell}(z;a)$ changes at 
$z_c(a)$. 

The non-analytic  points $z_c(a)$ in case i) arises from the zeros of 
$p_{k,\ell+1}(z,a)$, which we will call the \textbf{indicial} equation\footnote{This coefficient does not appear to have a standard name. Thus we name it analogous to that from differential equations.}. 
On the other hand, the non-analytic    points in case ii) arise from the zeros of the discriminant of \eqref{eq_Rpoly}. If $z_c(a)$ is real then the zeros of the discriminant occur where the curve $\Rgf{k}{\ell}_i(z;a)$ is finite but  has infinite slope.

An explicit expression for the indicial equation for $(k,\ell)$-restricted Lukasiewicz paths is readily obtained by 
substituting \eqref{simple2} into \eqref{eq_rgf} and extracting the coefficient of  $ \Rgf{k}{\ell}(z;a)^{\ell+1}$, which gives
\begin{align}
	p_{k,\ell+1}(z,a) = & (a-1)^\ell-\sum_{j=k}^\ell (az)^{j+1} (a-1)^{\ell-j}\\
	=& (a-1)^\ell-  (a-1)^{\ell-1}\, \Gamma_{k,\ell}\left(\frac{a z}{a-1}\right)\;,
\end{align}
where $ \Gamma_{k,\ell}$ is given by \eqref{eq_gampoly}.
Note, this is the same equation as \eqref{eq_indeq} -- thus we see the zero in the denominator of \eqref{RL} is the same as the (unique positive) root of the indicial equation. Following the same argument as in the proof of \thmref{thm_sing} is follows that for $a$ sufficiently large this unique root must be the   radius of convergence of $R$ since, as show below,  the  discriminant zeros are $a$ independent. 

The discriminant, denoted $\Delta P$, of a polynomial $P(R)$ whose coefficients are polynomials in $z$  is the resultant of $P(R)$ and its derivative 
\[
	\Delta  P(z)=\result\left(P,\frac{\partial P}{\partial R};R\right)\;,
\] 
and thus may be obtained from a Sylvester determinant \cite{moiseevich:1994qy}. Below we will prove that for $(k,\ell)$-restricted Lukasiewicz paths the discriminant always possesses  a factor in $z$ independent of $a$. 

In order to prove this generic structure we need the discriminant of the polynomial satisfied by $R=\Rgf{k}{\ell}(z;a)$, that is
\begin{equation}\label{eq_Rpoly2}
	P_1=\,p_{k,\ell+1}(z,a) R^{\ell+1}+p_{k,\ell}(z,a) R^{\ell}+\dots+p_{k,0}(z,a).
\end{equation}
We want to express the discriminant of \eqref{eq_Rpoly2} in terms of the 
discriminant of the polynomial satisfied by $L=\Lgf{k}{\ell}(z)$,
obtained from \eqref{eq_lgf} as 
\[
P_2=\	\sum_{j=k}^{\ell} \left(z L\right)^{j+1}-L+1\;.
	\label{eq_lgf2}
\]
We are aided by the following theorem (for a proof see \cite{moiseevich:1994qy}).
\begin{theorem}(PGL(2)-invariance)\label{thm_flatfree}
Let $Q(z)$ be a polynomial of degree $n$ and $\displaystyle r: z \mapsto \frac{\alpha z+\beta}{\gamma z+\delta}$ 
with $\alpha,\beta,\gamma,\delta \in \mathbb{R}$.  The discriminant of 
\[
	(\gamma z+\delta )^nQ\bigl(r(z)\bigr)
\] is given by
\[
	(\alpha \delta - \beta \gamma)^{n(n-1)} \Delta Q\;,
\]
where $\Delta Q$ is the discriminant of $Q(z)$.
\end{theorem}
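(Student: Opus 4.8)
The plan is to prove the identity by passing to the roots of $Q$ and tracking how the M\"obius map rearranges them. Write $Q(z)=c\prod_{i=1}^n(z-\rho_i)$ with leading coefficient $c$ and (possibly complex) roots $\rho_1,\dots,\rho_n$, and recall the root formula for the discriminant,
\[
\Delta Q = c^{2n-2}\prod_{1\le i<j\le n}(\rho_i-\rho_j)^2.
\]
First I would compute the transformed polynomial explicitly. Clearing the common denominator inside $Q\bigl(r(z)\bigr)$ gives
\[
(\gamma z+\delta)^n\,Q\bigl(r(z)\bigr)=c\prod_{i=1}^n\bigl[(\alpha-\rho_i\gamma)z+(\beta-\rho_i\delta)\bigr],
\]
which is again a degree-$n$ polynomial with leading coefficient $\tilde c=c\prod_i(\alpha-\rho_i\gamma)$ and roots $\tilde\rho_i=(\rho_i\delta-\beta)/(\alpha-\rho_i\gamma)=r^{-1}(\rho_i)$, i.e.\ the inverse M\"obius images of the original roots.

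The decisive step is a direct computation of the root differences. Placing the two terms over a common denominator and expanding, the cross terms cancel and one is left with exactly the determinant:
\[
\tilde\rho_i-\tilde\rho_j=\frac{(\alpha\delta-\beta\gamma)(\rho_i-\rho_j)}{(\alpha-\rho_i\gamma)(\alpha-\rho_j\gamma)}.
\]
Substituting this into the root formula for the discriminant of $(\gamma z+\delta)^n Q(r(z))$, the factor $(\alpha\delta-\beta\gamma)$ occurs once per pair and then squared, so over the $\binom{n}{2}$ pairs it accumulates to $(\alpha\delta-\beta\gamma)^{n(n-1)}$. It then remains to check that the denominators cancel against the leading-coefficient power $\tilde c^{\,2n-2}=c^{2n-2}\prod_i(\alpha-\rho_i\gamma)^{2(n-1)}$: since each index $i$ appears in exactly $n-1$ pairs, one has $\prod_{i<j}(\alpha-\rho_i\gamma)^2(\alpha-\rho_j\gamma)^2=\prod_i(\alpha-\rho_i\gamma)^{2(n-1)}$, which matches the spurious product buried in $\tilde c^{\,2n-2}$. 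After cancellation only $c^{2n-2}\prod_{i<j}(\rho_i-\rho_j)^2=\Delta Q$ survives, multiplied by $(\alpha\delta-\beta\gamma)^{n(n-1)}$, which is the claim.

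The main obstacle is not the core algebra but the handling of degeneracies. The computation tacitly assumes the transformed polynomial still has degree $n$, which fails precisely when some root satisfies $\rho_i=\alpha/\gamma$ (that root is carried to infinity by $r^{-1}$), and the whole setup presumes $\alpha\delta-\beta\gamma\ne0$ so that $r$ is a genuine element of $PGL(2)$. I would dispose of this cleanly by viewing both sides of the asserted equation as polynomials in the entries $\alpha,\beta,\gamma,\delta$ and in the coefficients of $Q$: they agree on the Zariski-dense open locus where $\gamma\,(\alpha\delta-\beta\gamma)\prod_i(\alpha-\rho_i\gamma)\ne0$, and a polynomial identity holding on a dense set holds identically. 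Conceptually this is nothing but the statement that the discriminant of the binary form $Y^nQ(X/Y)$ is a relative $SL(2)$-invariant of weight $n(n-1)$, the map $r$ being the induced action on the homogeneous coordinates; the root computation above is simply the elementary, self-contained verification of that invariance.
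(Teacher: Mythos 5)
Your argument is correct, but note that the paper does not actually prove Theorem~\ref{thm_flatfree}: it cites Gelfand--Kapranov--Zelevinsky \cite{moiseevich:1994qy}, where the statement appears as the relative $SL(2)$-invariance of the discriminant of binary forms. Your proof therefore supplies something the paper omits, and it does so by the most elementary available route: clearing denominators to identify the transformed polynomial as $c\prod_i\bigl[(\alpha-\rho_i\gamma)z+(\beta-\rho_i\delta)\bigr]$, reading off that its roots are $r^{-1}(\rho_i)$, and using the cross-ratio-type identity $\tilde\rho_i-\tilde\rho_j=(\alpha\delta-\beta\gamma)(\rho_i-\rho_j)/\bigl((\alpha-\rho_i\gamma)(\alpha-\rho_j\gamma)\bigr)$ together with the root formula $\Delta Q=c^{2n-2}\prod_{i<j}(\rho_i-\rho_j)^2$; the bookkeeping of the $(\alpha-\rho_i\gamma)$ factors against $\tilde c^{\,2n-2}$ is exactly right, and your Zariski-density argument correctly disposes of the degenerate loci (the locus you describe via the roots is still open and dense in the coefficients, since $\prod_i(\alpha-\rho_i\gamma)=\gamma^nQ(\alpha/\gamma)/c$). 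One caveat worth flagging: your proof establishes the theorem for the discriminant normalised as $c^{2n-2}\prod_{i<j}(\rho_i-\rho_j)^2$, whereas the surrounding text of the paper defines $\Delta P=\result(P,\partial P/\partial R;R)$, which differs from this by a factor of the leading coefficient; with the resultant normalisation the transformation law acquires an extra factor $\gamma^nQ(\alpha/\gamma)/c$, so the theorem as stated is only exactly true for the normalised discriminant you use. This discrepancy is in the paper, not in your proof, and it does not affect the paper's application (which only uses the location of the zeros in $z$), but it is worth making the choice of normalisation explicit.
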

Note, the $(\gamma z+\delta )^n$ factor is present to clear the denominator introduced by the substitution $r$.
We now apply \thmref{thm_flatfree} to the polynomial   in  \eqref{eq_Rpoly2} along with the inverse of \eqref{simple2}, namely,
\begin{equation}
  \Rgf{k}{\ell}(z;a)=\frac{\Lgf{k}{\ell}(z)}{a-(a-1)\Lgf{k}{\ell}(z)}.
\label{simple3}
\end{equation}
The result is the discriminant  of polynomial $P_1$   given in terms   of the `contact-free' polynomial, $P_2$, that is
\begin{equation}
	\Delta P_1 (z;a)=a^{\ell(\ell+1)}\, \Delta P_2(z)\;.
	\label{eqdiscP1}
\end{equation}
This shows that the $a$ dependence of the discriminant of $P_1$ is contained in the $a^{\ell(\ell+1)}$ factor,
and therefore that the roots (as a polynomial in $z$) of $ \Delta P_1 (z;a)$ are independent of $a$.

We identify the critical contact weight, $a_c$,  as the value of $a$ for which the discriminant \eqref{eqdiscP1} and indicial polynomials have simultaneous zeros,
\begin{align}
	\Delta P_2(z_c)&=0\\
		p_{k,\ell+1}(z_c,a_c)&=0.
\end{align}
Eliminating $z_c$ from these equations by taking the resultant gives us a polynomial equation satisfied by $a_c$, namely,
\begin{equation}
\crit_{k,\ell}(a)=	\result\left(\Delta P_2(z),p_{\ell+1}(z,a);z\right).
\end{equation}
This is the same equation that would be obtained by eliminating $u_c$ between \eqref{eq_aceq} and  \eqref{u} by taking the resultant.

For example,  for  Motzkin paths it is simple to obtain
\[
\crit_{0,1}(a)=a^4 (2 a-3)^2
\]
which gives the familiar result $a_c=3/2$. However, for other values of $k$ and $\ell$ the equation can become rather complicated, for example,  
\[
\crit_{2,4}(a)=7 a^5-113 a^4+770 a^3-2756 a^2+5180
	   a-4112
\]
which does not factor over $\mathbb{Z}$; as such the root has to be found numerically.

\section{Rise restricted Dyck path bijection} 
\label{sec:bijections}

We now show that the $(k,\ell)$-restricted Lukasiewicz paths are in bijection with the  
$(k,\ell)$-rise restricted Dyck paths, hence giving another family of models with the same critical behaviour.
\begin{definition}[Rise, valley, peak, hook]\label{def_rise}
	Let $s_1s_2\dots s_n$ be the step sequence of a Dyck path. A \textbf{valley} (resp.\ \textbf{peak}) is a pair $s_is_{i+1}$ with $s_i$ a down (resp.\ up) step and $s_{i+1}$ an up (resp.\ down) step. A rise of \textbf{length} $j$ is a maximal subsequence of $j$ steps,  $r_{i,j}=s_{i}s_{i+1}\dots s_{i+j-1}$ such that 
	\begin{itemize}
		\item $r_{i,j}$ contains no valleys or peaks
		\item $s_{i-1}s_{i}$ is a valley (or $i=1$) and  $s_{i+j-1}s_{i+j}$ is a peak.
	\end{itemize}
A \textbf{hook} is a rise and the down step of the peak, that is the subsequence $r_{i,j} s_{i+j}$. The length of the hook is $j+1$. A  Dyck path is \textbf{$(k,\ell)$-rise restricted} iff the length $j$ of  all rises satisfies $k\le j\le \ell$.
\end{definition}
Thus a $(k,\ell)$-rise restricted Dyck path contains no rises shorter than $k$ or greater than $\ell$. An example of a Dyck path with rises shown is given in \figref{fig_rise}.
\begin{figure}[htb] 
	\centering
 		\includegraphics[scale=1.2]{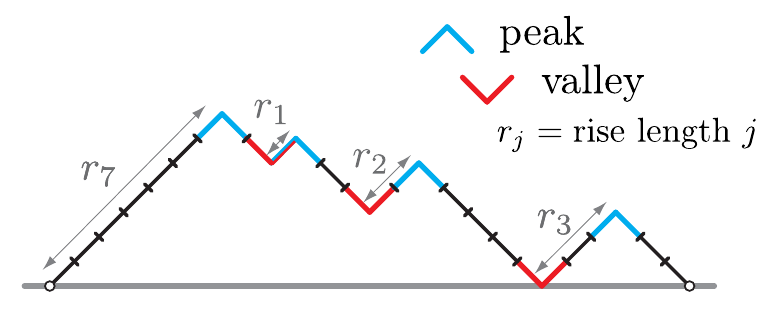}
	\caption{An example of a Dyck path shown the rises, hook, peaks and valleys.}
	\label{fig_rise}
\end{figure}

We can now state the following bijection.
\begin{theorem}\label{thm_luka_rise}
	The set of $(k,\ell)$-restricted Lukasiewicz paths of length $n$  is in bijection with the set of  $(k+1,\ell+1)$-rise restricted Dyck paths of length $2n$. Furthermore the bijection preserves the contact weight of the path.
\end{theorem}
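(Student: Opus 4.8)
My plan is to construct the bijection recursively, mirroring the factorization argument already used to derive the algebraic equations in Theorem 1.1. Recall that a $(k,\ell)$-restricted Lukasiewicz path decomposes by looking at its leftmost jump step: the path first takes a jump $j$ step (with $k\le j\le\ell$) to height $j$, then must eventually return to the surface via $j$ down steps, with arbitrary above-the-floor Lukasiewicz excursions inserted between consecutive down steps, and finally continues with another Lukasiewicz path after returning. The image under the bijection should decompose the same way on the Dyck side. The key idea is the following local rule: a jump $j$ step in the Lukasiewicz path, which raises the height by $j$ in a single step, corresponds to a \emph{rise of length $j+1$} in the Dyck path (an uninterrupted run of $j+1$ up steps). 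This immediately explains the index shift $(k,\ell)\mapsto(k+1,\ell+1)$ in the statement, since a jump $j$ with $k\le j\le\ell$ maps to a rise of length $j+1$ with $k+1\le j+1\le\ell+1$. A down step in the Lukasiewicz path maps to a down step in the Dyck path. Since a Lukasiewicz path of length $n$ has $n$ steps and the height must return to zero, a straightforward counting (each jump $j$ contributes $j+1$ Dyck steps and each down step contributes one, and the total height rise equals the total height fall) should yield exactly $2n$ steps on the Dyck side.

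First I would make the map precise by induction on path length, using the factorization from the proof of Theorem 1.1. I define the image of the empty path to be the empty Dyck path. For a nonempty path whose leftmost step is a jump $j$, I write the Lukasiewicz path as
\[
  \Phi\bigl(\text{jump }j,\ \omega_1, d, \omega_2, d, \ldots, d, \omega_{j+1},\ \text{rest}\bigr),
\]
where $d$ denotes a down step, $\omega_1,\ldots,\omega_{j+1}$ are the (recursively smaller) above-floor Lukasiewicz excursions between down steps, and ``rest'' is the trailing Lukasiewicz path. I then send this to the Dyck path obtained by emitting $j+1$ consecutive up steps, followed by the recursively constructed Dyck images of the excursions interleaved with single down steps, followed by the image of the rest. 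The main verification is that the resulting Dyck path is genuinely $(k+1,\ell+1)$-rise restricted: I must check that the $j+1$ up steps I emit really do form a \emph{maximal} rise in the sense of Definition 1.3 (i.e. they are preceded by a valley or the path start and followed by a peak), so that no two rises accidentally merge and no rise is split. Because each emitted block of up steps is immediately followed by a down step (the first $d$, giving a peak) and is preceded either by the start of a sub-path or by a down step coming from the enclosing factorization (giving a valley), maximality holds, and every rise in the image arises from exactly one jump step. Conversely, given a $(k+1,\ell+1)$-rise restricted Dyck path I read off its leftmost maximal rise of length $j+1$, replace it by a single jump $j$ step, and recurse on the sub-Dyck-paths sitting under the corresponding hook and after it. Checking $\Phi$ and $\Phi^{-1}$ are mutually inverse is then a routine induction.

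The contact-weight preservation is the final point, and it follows cleanly from the local rule. A contact of a Lukasiewicz path is a return $v_i$ to height zero with $i\ge1$; in the factorization these are exactly the endpoints of maximal subpaths that touch the floor, namely the point where each top-level down step $d$ lands at height zero and the junctions between top-level factors. Under $\Phi$ these floor-touching vertices map bijectively to the returns to height zero of the Dyck image, because up/down displacements are preserved step-by-step (a jump $j$ produces a net rise of $j$ via $j+1$ ups minus the matching structure, and down steps are preserved), so heights along the Dyck path track the heights along the Lukasiewicz path at the corresponding marked vertices. Hence each surface contact carrying weight $a$ on one side corresponds to exactly one contact carrying weight $a$ on the other, and $\Phi$ is weight preserving.

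The step I expect to be the genuine obstacle is the maximality bookkeeping in Definition 1.3: I must argue carefully that the emitted up-runs neither merge with an adjacent up-run nor get subdivided by an intervening valley/peak, in both directions of the bijection. In particular, on the inverse side I need that reading off ``the leftmost maximal rise'' is well defined and that peeling it off leaves strictly smaller $(k+1,\ell+1)$-rise restricted Dyck paths so the recursion terminates; the subtlety is that removing a rise and its hook must not create a new shorter-than-$(k{+}1)$ rise elsewhere, which is where the floor structure of the factorization does the essential work.
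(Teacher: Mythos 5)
There is a genuine gap: the local rule you use does not produce a Dyck path, let alone one of length $2n$. You send a jump $j$ step to a rise of $j+1$ up steps and each Lukasiewicz down step to a single Dyck down step, and nothing else. Count the steps: if the Lukasiewicz path has $m$ jump steps with jumps $j_1,\dots,j_m$ and $d=\sum_i j_i$ down steps (so $n=m+d$), your image has $\sum_i(j_i+1)=d+m$ up steps but only $d$ down steps; it therefore ends at height $m>0$ and has $n+d<2n$ steps whenever there is at least one jump step. Already for the single-arch path (an up step followed by a down step), viewed as a $(1,1)$-restricted Lukasiewicz path, your rule outputs two up steps followed by one down step rather than the required two up steps followed by two down steps. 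Your own counting sentence (``each jump $j$ contributes $j+1$ Dyck steps and each down step contributes one'') yields $2d+m=2n-m$, not $2n$. The missing ingredient is precisely the \emph{hook} of Definition~\ref{def_rise}: a jump $j$ step must be replaced by a hook of length $j+2$, that is, a rise of $j+1$ up steps \emph{immediately followed by one down step} (the down step of the peak). This restores the up/down balance ($d+m$ of each), gives $\sum_i(j_i+2)+d=2n$ steps, and makes the net height change of the image of a jump step equal to $+j$, so that vertex heights --- and hence contacts --- really are preserved; your phrase ``$j+1$ ups minus the matching structure'' is exactly where this down step should have appeared but never does.

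The hook also repairs your maximality argument, but it forces a different ordering from the one you wrote: the peak's down step must come immediately after the rise, and only then do the images of the excursions and the remaining down steps follow. As written you insert $\Phi(\omega_1)$ directly after the $j+1$ up steps, so whenever $\omega_1$ is nonempty its leading rise concatenates with the emitted rise, destroying the peak and potentially violating the $(k+1,\ell+1)$ restriction. (There is also a unique-factorization slip: with $j$ down steps there are only $j$ excursion slots, at heights $j,j-1,\dots,1$; your $\omega_{j+1}$ sits at height $0$ and is indistinguishable from the trailing ``rest''.) With these corrections your recursive construction becomes the paper's replacement ``jump $j$ step $\mapsto$ length $j+2$ hook''; the paper then verifies the doubling of the length by globally pairing each Dyck down step with its horizontally visible up step rather than by induction, but that difference is cosmetic once the local rule is right.
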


The bijection is a generalisation of the classical bijection between Lukasiewicz paths (with no jump restriction) of length $n$ and Dyck paths of length $2n$. The idea of the classical bijection is to replace the jump $j$ step of the Lukasiewicz path with a length $j+2$ hook as illustrated below.
\begin{center}
		\includegraphics[scale=1.2]{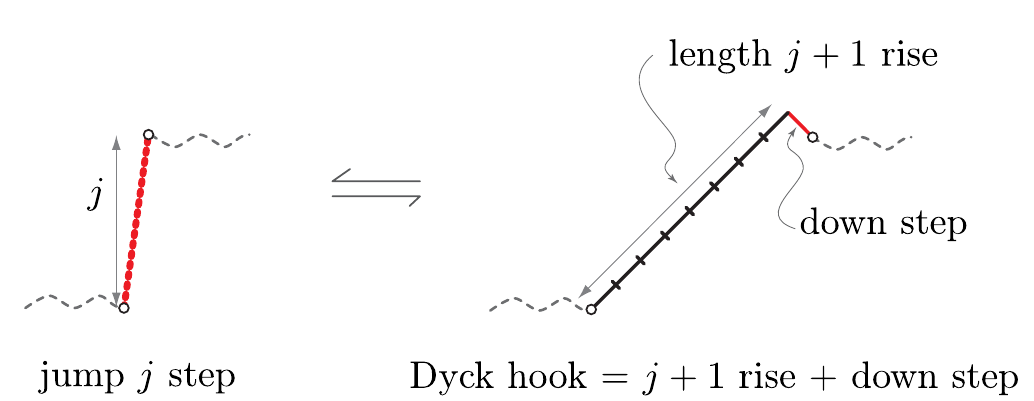} 
\end{center}
Since this is a straightforward generalisation of the  classical bijection we do not provide a detailed proof, only the outline.  

\begin{proof}[Proof outline]\label{pf_lbij}
First note that the hook-to-rise replacement does not change the	height of any of the Lukasiewicz vertices (only their $x$-coordinates) and hence all the steps after the replacements are above the surface (\emph{i.e.}\ it must be a Dyck path). After the hook-to-rise replacement, two consecutive Lukasiewicz jump steps are separated by a peak (at the end of the first hook)  in the Dyck path, and hence going back from a Dyck path to a Lukasiewicz path, the heights of the jump steps are well defined. 

All that remains to   show is that the resulting Dyck path is length $2n$  \emph{i.e.}\ has twice as many steps as the Lukasiewicz path. This follows if we can partition the set of steps of the Dyck path into two sets of equal size and have the rise-to-hook replacement remove one of the two sets. 
The partition is simple; an up step set, $S_u$ and a down step set, $S_d$. We define a bijection $\Gamma: S_d\to S_u$ and show the rise-to-hook replacement removes all the steps in $S_u$.  If $s_i$ is a down step then $s_i'=\Gamma(s_i)\in S_u$ is defined to be the step ``horizontally visible'',  to the left as illustrated in \figref{fig_hviz}.
\begin{figure}[htb] 
	\centering
		\includegraphics[scale=1.2]{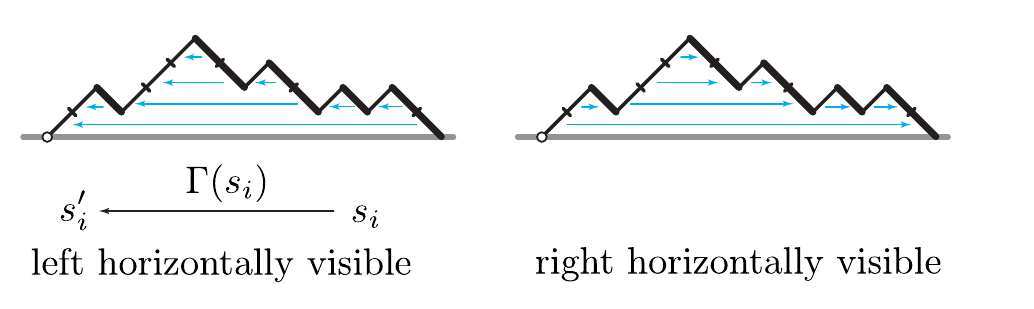}
	\caption{Horizontally visible steps - to the left (left) or to the right (right).}
	\label{fig_hviz}
\end{figure}
 
More precisely $s_i'$ is the rightmost of the set of up steps to the left of $s_i$ that are at the same height as $s_i$. Consider the  rise-to-hook replacement as the composition of two subsequent constructions: i) first delete all the steps in $S_u$ as they constitute the rises (leaving the $n$ down steps of $S_d$) and then ii) replace  the `down' step  of each of the hooks by a jump $j$ \emph{up} step (no change in number of steps).

Clearly if the jump steps are $(k,\ell)$-restricted  then the length of the rises, $j$ are restricted to $k+1\le j\le \ell+1$ as a jump $j$ step maps to a length $j+1$ rise. 

Finally, since the height of the Lukasiewicz path vertices (or inversely the Dyck vertices) are not changed under the bijection the contact weight of the path is unchanged.
\end{proof} 

Since the above bijection is weight preserving, the partition functions for  the Lukasiewicz paths and bijected rise restricted Dyck  paths will be  identical; hence they will have the same thermodynamic properties and thus critical behaviour and phase diagrams.

\section{$(1,\infty)$-restricted Lukasiewicz paths and Motzkin paths} 
\label{sec:plateaux_restricted_motzkin_path_bijection}

\newcommand{\lnp}{L_n^+}	
\newcommand{\gam}{\Ga^-_n}	 
\newcommand{\gao}{\Ga^o_n}

We now consider a new    Motzkin path bijection -- the $(k,\ell)$ location of the corresponding model is shown in \figref{fig_twoparam}. 
The  bijection is somewhat unusual for lattice paths in that it bijects paths of two consecutive lengths  to a single length path. Unfortunately it does \emph{not} preserve the contact weight. However, the generating function is readily obtained (it is a quadratic algebraic function) and the critical contact weight is given by
\[
	a_c=3\;,
\]
which is the limiting point of the sequence shown in \figref{fig_acseries}.

\begin{theorem}\label{thm:plat} Let $M_n$ be the set of Motzkin paths of length $n$ and $L_n$ the set of $(1,\infty)$-restricted Lukasiewicz paths of length $n$. Then there exists a bijection between $\lnp=L_n\cup L_{n+1}$ and $M_n$.
\end{theorem}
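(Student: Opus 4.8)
The plan is to first confirm the enumeration $|\lnp|=|M_n|$ from the generating functions of \thmref{thm_lukaeqs}, and then to promote this to an explicit bijection by partitioning $M_n$ on a single structural feature. Specialising \eqref{eq_lgf} to $(k,\ell)=(1,\infty)$ and writing $L=\Lgf{1}{\infty}(z)$ gives
\[
L=1+\sum_{j\ge1}(zL)^{j+1}=1+\frac{(zL)^2}{1-zL},
\]
so that $z(1+z)L^2-(1+z)L+1=0$. Letting $M(z)=\sum_n|M_n|z^n$ be the Motzkin generating function, which satisfies $z^2M^2+(z-1)M+1=0$, a short elimination yields $(1+z)L(z)=1+zM(z)$, equivalently $M(z)=L(z)+\frac{L(z)-1}{z}$. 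Extracting $[z^n]$ gives $|M_n|=|L_n|+|L_{n+1}|=|\lnp|$ (the union being disjoint), and, more usefully, it tells me to build the bijection so that one part of $M_n$ accounts for $L_n$ and another for $L_{n+1}$.

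Concretely, I would split $M_n$ according to whether the path has a \emph{level step at height zero}. Call a Motzkin path without such a step a \emph{Riordan path}; the two blocks of $M_n$ then have sizes $R_n$ and $M_n-R_n$, which I expect to match $|L_n|$ and $|L_{n+1}|$. A length-preserving bijection $\alpha$ (constructed below) carries the Riordan paths of length $n$ onto $L_n$. For a Motzkin path of length $n$ that does carry a level step at height zero I would apply a length-raising ``arch insertion'' $\gamma$: locate the leftmost level step at height zero, factor the path as $P\,H\,T$ with $P$ a Riordan path returning to $0$ and $T$ a Motzkin path, and replace $H\,T$ by $U\,T'\,D$, where $T'$ is $T$ shifted up by one unit. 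The output is a Riordan path of length $n+1$, since the new arch $U\,T'\,D$ meets height zero only at its endpoints; applying $\alpha$ then lands it in $L_{n+1}$. The map $\gamma$ is invertible because $U\,T'\,D$ is precisely the \emph{last} arch of its image, so it can be detached and recontracted to $H\,T$.

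The step I expect to be the crux is the length-preserving bijection $\alpha$ between $(1,\infty)$-restricted Lukasiewicz paths and Riordan paths, whose step sets look very different. Both families are counted by plane trees with no vertex of out-degree one: indeed the Lukasiewicz generating function $L$ and the Riordan generating function both satisfy $t=1+(zt)^2/(1-zt)$, which is exactly the equation for such trees counted by number of edges. I would make this explicit on the Lukasiewicz side through the classical Lukasiewicz encoding (a jump-$j$ step becomes a vertex with $j+1\ge2$ children and a down step becomes a leaf, so the restriction $j\ge1$ forbids out-degree one, and the leftmost-jump decomposition used in the outline of \thmref{thm_lukaeqs} matches the root-degree decomposition of the tree). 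On the Riordan side I would build the corresponding encoding by matching an analogous decomposition of Riordan paths to the same tree recursion, and then compose the two encodings to obtain $\alpha$.

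Finally I would assemble $\Psi:M_n\to\lnp$ from the two blocks and verify it is a bijection: the images lie in $L_n$ and $L_{n+1}$ respectively, they are disjoint, and each constituent map is invertible by the descriptions above, so $\Psi$ is injective and onto $\lnp$. The genuinely delicate points are the recursive bookkeeping in $\alpha$ (matching the decomposition of Riordan paths with the jump decomposition of Lukasiewicz paths while keeping the edge and length statistics aligned) and checking that $\gamma$ respects the height constraint with its leftmost-flat and last-arch descriptions mutually inverse; everything else is routine. As a consistency check one also sees why $\Psi$ cannot preserve the contact weight: $\gamma$ alters the number of returns to height zero.
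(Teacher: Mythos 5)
Your generating-function computation is correct: from \eqref{eq_lgf} with $(k,\ell)=(1,\infty)$ one gets $z(1+z)L^2-(1+z)L+1=0$, the branch argument forces $(1+z)L(z)=1+zM(z)$, and hence $|M_n|=|L_n|+|L_{n+1}|$. Since the theorem as literally stated only asserts the \emph{existence} of a bijection between the finite sets $L_n\cup L_{n+1}$ and $M_n$, this cardinality identity already settles the bare claim. Moreover your outer architecture is essentially the paper's: the paper's map $\Gamma_n$ sends $L_n$ exactly onto the Motzkin paths with no horizontal step on the surface (your Riordan paths) and $L_{n+1}$ onto those with at least one such step, and your arch-insertion $\gamma$ (leftmost surface flat $\leftrightarrow$ last arch) is a correct, invertible account of the second block. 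Your closing remark that the map cannot preserve contacts because $\gamma$ changes the number of returns to the surface also agrees with the paper.

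The genuine gap is the map $\alpha$ from $L_n$ onto Riordan paths of length $n$, which you correctly identify as the crux but never construct. On the Lukasiewicz side the tree encoding is standard (a jump-$j$ step becomes a vertex with $j+1\ge2$ children, so the restriction $j\ge1$ forbids out-degree one), but on the Riordan side you only assert that you ``would build the corresponding encoding by matching an analogous decomposition of Riordan paths to the same tree recursion'' $t=1+\sum_{c\ge2}(zt)^c$. No such decomposition of Riordan paths is standard or obvious --- the natural first-return decomposition gives $R=1+z^2MR$ with $M$ the Motzkin series, not the required equation --- and producing one is precisely where the combinatorial content of the theorem lives. The paper supplies exactly this missing piece, and does so directly rather than through trees: its map $\Gamma^o_n$ replaces each jump step by an up step and all but the last of that jump step's right-visible down steps by horizontal steps; this is length-preserving, invertible (the jump height is recovered from the number of horizontal steps between an up step and its right-visible down step), and lands exactly in the surface-flat-free Motzkin paths. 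Taking $\alpha=\Gamma^o_n$ would complete your skeleton; without it, your explicit bijection reduces the problem to the paper's core construction rather than solving it.
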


\begin{proof}\label{pf:lm}
	Let $l\in \lnp$ and let $l$ have the step sequence $s_1s_2\dots s_k$ (with $k\in\set{n,n+1}$. 	If $s_i$ is a jump $j$ step then denote the set of down steps  which are  `horizontally visible' to the right from $s_i$   as $\rightViz(s_i)$, see \figref{fig_hviz}.
	
	We define two maps $\gao$ and $\gam$ which act on jump steps $s_i$ and the jump steps' associated sequence of horizontally visible down steps, $\rightViz(s_i)$.  Combining these two maps gives the bijection  $\Ga_n: \lnp\to M_n$. We then show that $\Ga_n$ is well defined, injective and surjective and hence a bijection.
The two maps are   defined as follows.

\begin{itemize}
	\item $\gao$: Let $s_i$ be a jump step of $l$ 
	\begin{itemize} 
		\item Replace the jump step by an  up step 
		\item Replace all but the last step  of $\rightViz(s_i)$ by horizontal steps
	\end{itemize}
	\item $\gam$: Let $s_i$ be a jump step of $l$
		\begin{itemize}
			\item Delete $s_i$
			\item Replace all steps in $\rightViz(s_1)$ by horizontal steps
		\end{itemize}		 
\end{itemize}
These two maps are illustrated schematically below.
\begin{center}
		\includegraphics[scale=1.0]{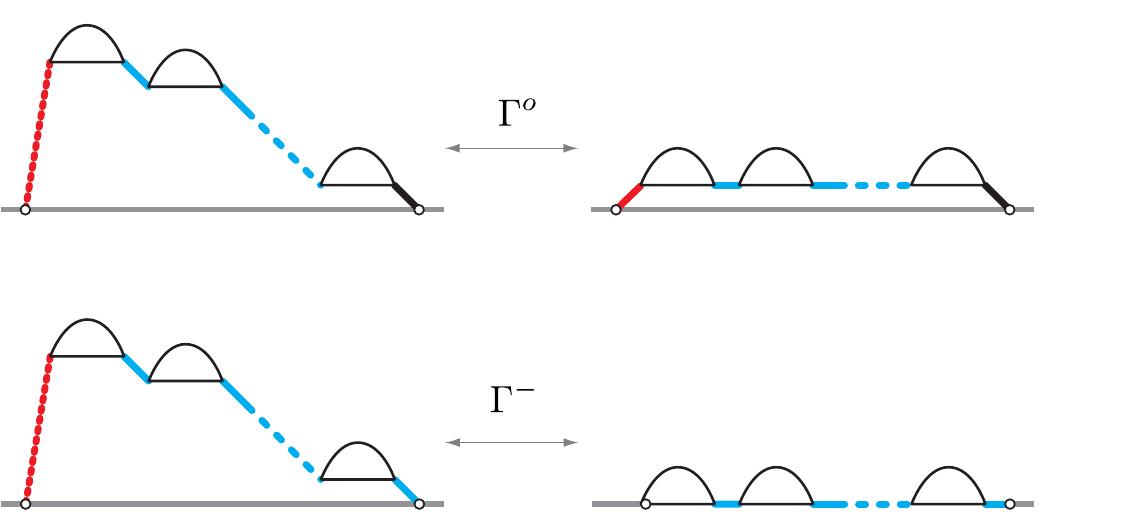}
\end{center}Note that 1) since there are no horizontal steps in any $l\in \lnp$ (as all jump $j$ steps have $j>0$) it is clear from \figref{fig_lukafact} that the defining characteristic of the  \includegraphics[scale=1.0]{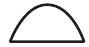} factor is that it contains no horizontal steps on its `surface' (a horizontal line at the same height of the left vertex of the first step of the factor), 2) $\gao$ does not change the number of steps (hence the superscript ``o'') whilst $\gam$ decreases the number of steps by one and 3) the height of the first vertex of the jump step $s_i$ and height of the last vertex in $\rightViz(s_i)$ is unchanged under the action of either map.

The bijection $\Ga_n: \lnp\to M_n $ is then defined as follows
\begin{itemize}
	\item If $l\in L_n$ apply $\gao$ to all jump steps $s_i\in l$.
	\item If $l\in L_{n+1}$ apply $\gam$ to the first (\emph{i.e.}\ leftmost) jump step and $\gao$ to all the remaining jump steps. 
\end{itemize}	
An example of the action of $\Ga_n$ is show below.
\begin{center}
			\includegraphics[width=\textwidth]{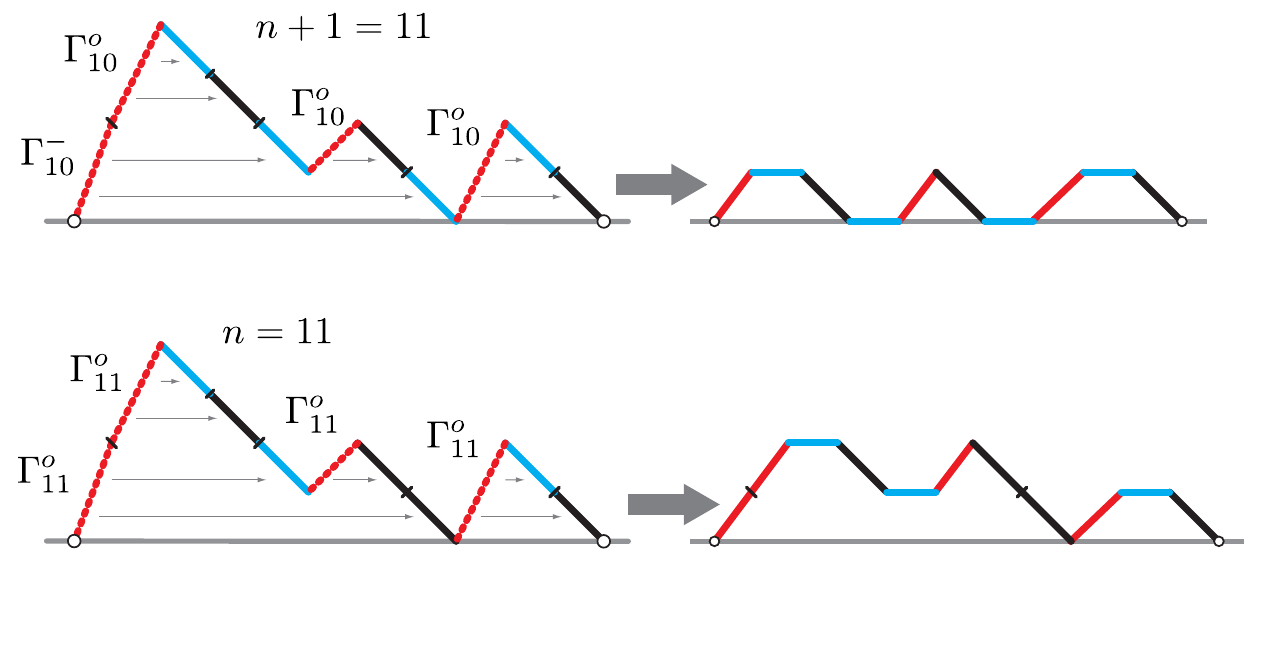}
\end{center}

\noindent \emph{Well defined:}	As noted above,  neither $\gao$ nor $\gam$ change the height of the   first and last vertices 
and hence their recursive action results in a  path which has all steps above the surface.  Since all steps are either up, 
down or horizontal the resulting path must be a Motzkin path. 
$\gao$ does not change the number of steps hence  if $l\in L_n$ then $\Ga_n(l)\in M_n$.  $\gam$ only acts on paths in $L_{n+1}$ and only acts on the first jump step thus if $l\in L_{n+1}$ then $\gam$ decreases the number of steps by one and hence $\Ga_n(l)\in M_n$. Thus $\Ga_n: \lnp\to M_n$.
\medskip

\noindent \emph{Injective:} We need to show that if $m=\Ga_n(l)$ and $m=\Ga_n(l')$ then $l=l'$. We show this by arguing that $m$ uniquely defines its preimage in $\lnp$ \emph{i.e.}\ $\Ga_n$ has an inverse. Note, there are no horizontal steps in any $l\in \lnp$ as all jump $j$ steps have $j>0$. All the horizontal steps of $m$ on the surface come from the action of $\Ga^-_n$ on an $l\in L_{n+1}$ path. Thus if $m$ has $j$ surface horizontal steps then the first jump step is uniquely jump $j$ and the surface horizontal steps become down steps. An up step  in $m$ arose from a unique jump step in $l\in \lnp$, the jump height of the jump step $j$ is uniquely determined by the number of horizontal steps on the ``surface'' (at the height of the up step) between the up step and the right-visible down step. If there are $k$ such horizontal steps then the up step becomes a $k+1$ jump step and the horizontal steps are replaced by down steps. Two examples are shown below for $\Ga_{13}$.
\begin{center}
 
			\includegraphics[scale=0.8]{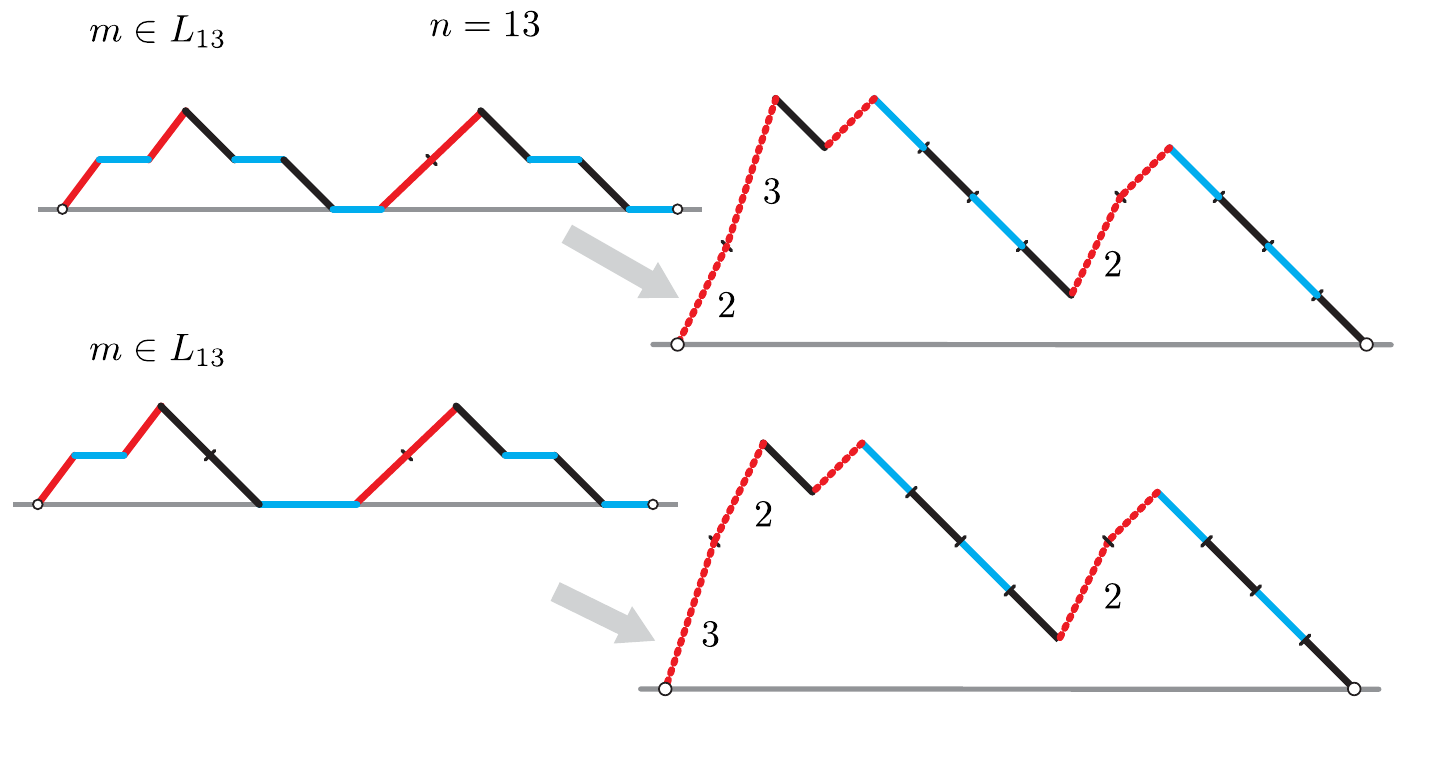}
\end{center}
The above example also illustrates how the order of two consecutive jump steps of different jump heights, in otherwise identical paths, arise from (or give rise to) different paths $m$.
\medskip

\noindent \emph{Surjective:} The injective paragraph  defines the inverse of $\Ga_n$ which clearly applies to every $m\in M_n$ and hence there exists an $l\in \lnp$ for which $m$ is an image \emph{i.e.}\ $\Ga$ is surjective. Those $m$ with any surface horizontal steps map to $L_{n+1}$ (as an initial jump step is added) and all the paths $m$ without horizontal surface steps map to $L_n$.  
\end{proof}

\section{Area under $(k,\ell)$-Lukasiewicz paths} 
\label{sec:lukasiewicz_paths_}

In this section we find a  $q$-deformed algebraic equation satisfied by the area-weighted generating function  of $(k,\ell)$-restricted Lukasiewicz paths which we solve in two cases, namely $(k,k)$ and $(0,\infty)$. The later solution can also be obtained via a bijection to Dyck paths which we present. 
 
\begin{definition}\label{def_area}
The \textbf{area} of a $(k,\ell)$-restricted Lukasiewicz path is the sum of the height of all
its vertices.
\end{definition}

Equivalently, one can connect all consecutive vertices of a path by straight lines and consider the
area enclosed by the path and the horizontal line that connects its end points.

Area-weighted Lukasiewicz paths satisfy a generalization of Theorem \ref{thm_lukaeqs}. 

\begin{theorem}\label{thm_lukaeqs_q}
Let $\Rgf{k}{\ell}(z;a,q)$ be the generating function for $(k,\ell)$-restricted Lukasiewicz paths keeping
track of contacts and area.  With respect to the partition functions, $\reslukapart{k}{\ell}_n(a,q)$, we have
\begin{equation}
	\Rgf{k}{\ell}(z;a,q)=\sum_{n\ge0} \reslukapart{k}{\ell}_n(a,q) z^n.
\end{equation}
The generating function $\Rgf{k}{\ell}(z;a,q)$ is given by the following pair of $q$-deformed algebraic equations
\begin{align}
	\Rgf{k}{\ell}(z;a,q)=1+ az \sum_{j=k}^\ell\left(\prod_{i=1}^jzq^i\Lgf{k}{\ell}(q^iz;q)\right) \Rgf{k}{\ell}(z;a,q)\;,\label{eq_rgf_q}\\
		\Lgf{k}{\ell}(z;q)=1+ \sum_{j=k}^{\ell}\left(\prod_{i=0}^jzq^i\Lgf{k}{\ell}(q^iz;q)\right)  \label{eq_lgf_q}
\end{align}
where $\Lgf{k}{\ell}(z;q)=\Rgf{k}{\ell}(z;1,q)$. 
\end{theorem}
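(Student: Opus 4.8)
The plan is to re-run the leftmost-jump factorisation that underlies the outline proof of \thmref{thm_lukaeqs}, now carrying the area statistic (by Definition~\ref{def_area}, the sum of the heights of all vertices) through each piece of the decomposition. The only new ingredient is a bookkeeping rule for how the area of a sub-path changes when it is raised to a positive height. Concretely, if a family of sub-paths has area generating function $F(z;q)=\sum_P z^{|P|}q^{\operatorname{area}(P)}$, where $|P|$ is the number of steps (so $P$ has $|P|+1$ vertices), then raising every $P$ rigidly by $h$ units adds $h$ to each vertex height and hence adds $h(|P|+1)$ to its area. Because consecutive pieces in the factorisation share one vertex, I would charge to each raised sub-path only its $|P|$ non-initial vertices; this replaces $h(|P|+1)$ by $h|P|$ and is realised exactly by the substitution $z\mapsto q^h z$, so a loop raised to height $h$ contributes $F(q^h z;q)$, while its single base vertex (at height $h$) is recorded by an explicit factor $q^h$.

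I would first establish \eqref{eq_lgf_q}. Decompose a non-empty $\Lgf{k}{\ell}$-object by its leftmost step, which must be a jump $j$ step with $k\le j\le\ell$: the object rises to height $j$ and then makes $j$ down steps back to the base, with a loop inserted at each of the heights $j,j-1,\dots,1$ and the base-level continuation playing the role of a loop at height $0$. This produces $j+1$ loops together with $j+1$ skeleton steps (one jump and $j$ down steps), matching the factor $z^{j+1}$. By the bookkeeping rule the loop at height $i$ contributes $\Lgf{k}{\ell}(q^i z;q)$ and its base vertex contributes $q^i$, so this jump-$j$ class has generating function $\prod_{i=0}^{j} z q^i\Lgf{k}{\ell}(q^i z;q)$; summing over $j$ and adding the empty path gives \eqref{eq_lgf_q}. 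The consistency check I would emphasise is that the omitted base vertices sit at heights $0,1,\dots,j$, whose sum $j(j+1)/2$ is exactly the exponent produced by $\prod_{i=0}^j q^i$, so every vertex is counted once and at its true height.

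Equation \eqref{eq_rgf_q} follows from the same decomposition of the full path $\Rgf{k}{\ell}(z;a,q)$, with two changes: the return of the leftmost excursion to the surface carries the contact weight $a$, and the base-level continuation is an arbitrary $R$-path rather than a loop. Hence the height-$0$ loop is replaced by a trailing factor $\Rgf{k}{\ell}(z;a,q)$, the surviving loops sit at heights $1,\dots,j$, and the jump step supplies the leading $z$, yielding the prefactor $az$ and the product $\prod_{i=1}^{j} z q^i\Lgf{k}{\ell}(q^i z;q)$. Setting $a=1$ recovers $\Lgf{k}{\ell}(z;q)=\Rgf{k}{\ell}(z;1,q)$, and since the contact weight is $q$-independent it simply rides along the area count.

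The step I expect to be the main obstacle is precisely this shared-vertex accounting, and making it airtight is where the care lies. One must check that the return vertex of the leftmost excursion (at height $0$, hence contributing nothing to the area) is counted exactly once, as the initial vertex of the trailing $R$-path, and more generally that at every gluing the discrepancy between ``raise all vertices'' ($h(|P|+1)$) and ``substitute $z\mapsto q^h z$'' ($h|P|$) is always the single base vertex absorbed by the explicit $q^i$ factor. Once this is verified uniformly, both $q$-deformed functional equations read off directly from the decomposition.
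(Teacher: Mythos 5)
Your proposal is correct and follows essentially the same route as the paper: the paper proves this theorem by the same leftmost-jump factorisation used for Theorem~\ref{thm_lukaeqs}, observing only that a loop raised to height $i$ now contributes $q^i\Lgf{k}{\ell}(q^iz;q)$ because of the area weights, which is precisely your shared-vertex bookkeeping rule. Your write-up merely makes explicit the vertex-counting argument that the paper leaves as a ``standard generalisation of known methods.''
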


Again, this is a standard generalisation of known methods. Note that in contrast to Theorem \ref{thm_lukaeqs}, a
Lukasiewicz path raised by  height $i$ leads to a term $q^i\Lgf{k}{\ell}(q^iz;q)$ due to the inclusion of
area weights.

An advantage of the inclusion of area weights is that we can express $\Rgf{k}{\ell}(z;a,q)$ via the solution
of a \emph{linear} $q$-difference equation. To obtain this result, we substitute
\begin{equation}
\Lgf{k}{\ell}(z;q)=\dfrac{H(qz;q)}{H(z;q)}
\end{equation}
into equation (\ref{eq_lgf_q}), and note that equation (\ref{simple2}) (solved for $R$) holds also when area weights are included. Thus we obtain the following theorem.

\begin{theorem}\label{thm_lukaeqs_q_linear}
\begin{equation}
	\Rgf{k}{\ell}(z;a,q)=\left(1-a+a\frac{\H{k}{\ell}(z;q)}{\H{k}{\ell}(qz;q)}\right)^{-1}
\end{equation}
where
\begin{align}
\H{k}{\ell}(qz;q)=\H{k}{\ell}(z;q)+\sum_{j=k}^\ell z^{j+1}q^{\binom{j+1}2}\H{k}{\ell}(q^{j+1}z;q)\;.
\end{align}
\end{theorem}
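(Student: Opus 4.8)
The plan is to follow the recipe indicated in the text: substitute the ratio ansatz $\Lgf{k}{\ell}(z;q)=\H{k}{\ell}(qz;q)/\H{k}{\ell}(z;q)$ into the functional equation \eqref{eq_lgf_q} for $\Lgf{k}{\ell}$, and separately verify that the rational relation \eqref{simple2} between $R$ and $L$ survives the inclusion of area weights. To lighten notation write $L=\Lgf{k}{\ell}(z;q)$, $R=\Rgf{k}{\ell}(z;a,q)$ and $H(z)=\H{k}{\ell}(z;q)$, so that the ansatz reads $L(z)=H(qz)/H(z)$ and in particular $L(q^iz)=H(q^{i+1}z)/H(q^iz)$.

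First I would evaluate the generic product appearing in \eqref{eq_lgf_q}. Under the ansatz,
\begin{equation}
\prod_{i=0}^j z q^i L(q^iz) = z^{j+1}\, q^{\binom{j+1}{2}}\,\prod_{i=0}^j \frac{H(q^{i+1}z)}{H(q^i z)} = z^{j+1}\, q^{\binom{j+1}{2}}\,\frac{H(q^{j+1}z)}{H(z)},
\end{equation}
where the factors of $z$ accumulate to $z^{j+1}$, the exponents of $q$ sum as $0+1+\dots+j=\binom{j+1}{2}$, and the ratio telescopes so that only the outermost numerator $H(q^{j+1}z)$ and the innermost denominator $H(z)$ survive. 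Substituting this into \eqref{eq_lgf_q}, writing $L=H(qz)/H(z)$ on the left-hand side, and clearing the common factor $H(z)$ yields exactly the claimed $q$-difference equation
\begin{equation}
H(qz)=H(z)+\sum_{j=k}^\ell z^{j+1}q^{\binom{j+1}{2}}H(q^{j+1}z).
\end{equation}
The telescoping is the one genuine idea here; everything else is bookkeeping of the $z$- and $q$-exponents. I would also remark that this recurrence together with the normalisation $H(0)=1$ determines $H$ uniquely as a formal power series, so the ansatz is well posed.

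It remains to produce the closed form for $R$. Rather than quote \eqref{simple2} as a black box, I would re-derive it directly in the area-weighted setting from \thmref{thm_lukaeqs_q}: factoring the $i=0$ term $zL(z)$ out of each product in \eqref{eq_rgf_q} and combining with the prefactor $az$ rewrites the sum as $\tfrac{a}{L}\sum_{j=k}^\ell\prod_{i=0}^j zq^iL(q^iz)$, and \eqref{eq_lgf_q} identifies that remaining sum as $L-1$. Hence $R=1+\tfrac{a(L-1)}{L}\,R$, which solves to $R=L/\bigl(L(1-a)+a\bigr)$, and taking reciprocals gives $R^{-1}=1-a+a/L$. This simultaneously confirms that \eqref{simple2} holds verbatim with area weights and, upon inserting $1/L=H(z)/H(qz)$, delivers
\begin{equation}
\Rgf{k}{\ell}(z;a,q)=\left(1-a+a\frac{\H{k}{\ell}(z;q)}{\H{k}{\ell}(qz;q)}\right)^{-1},
\end{equation}
as required. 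No individual step is hard; the only thing to watch is the index shift between the $\prod_{i=0}^j$ in \eqref{eq_lgf_q} and the $\prod_{i=1}^j$ in \eqref{eq_rgf_q}, since it is precisely this shift that produces the factor $a/L$ and hence the rational relation between $R$ and $L$.
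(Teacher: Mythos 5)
Your proposal is correct and follows exactly the route the paper indicates (substituting the ratio ansatz into \eqref{eq_lgf_q} so the product telescopes, and using the relation \eqref{simple2} solved for $R$); in fact you supply the details the paper omits, including the verification that \eqref{simple2} survives the inclusion of area weights via the $i=0$ versus $i=1$ index shift between the two products.
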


As $\Lgf{k}{\ell}(z;q)$ is a combinatorial generating function, it follows that $\H{k}{\ell}(z,q)$
must be of the form
\begin{equation}
\H{k}{\ell}(z;q)=\sum_{n=0}^\infty z^n\c{k}{\ell}_n(q)\;.
\end{equation}
This leads to the recurrence
\begin{equation}\label{recurrence}
(q^n-1)\c{k}{\ell}_n(q)=\sum_{j=k}^\ell q^{\binom{j+1}2+(j+1)(n-j-1)}\c{k}{\ell}_{n-j-1}(q)\;,
\end{equation}
with $\c{k}{\ell}_n(q)=0$ for $n<0$. As $\H{k}{\ell}(z;q)$ is determined up to a multiplicative constant, 
we let $\c{k}{\ell}_0(q)=1$ without loss of generality.

We can solve the recurrence in (\ref{recurrence}) explicitly for $(k,k)$-restricted and $(0,\infty)$-restricted
Lukasiewicz paths. The resulting $q$-series are summarised as follows.

\begin{theorem}\label{thm:qseries}
\begin{align}
\H{k}{k}(z,q)=&\sum_{n=0}^\infty\frac{q^{\binom{(k+1)n}2}(-z^{k+1})^n}{(q^{k+1};q^{k+1})_n}
\intertext{and}
\H{0}{\infty}(z,q)=&\sum_{n=0}^\infty\frac{q^{n^2-n}(-z)^n}{(q;q)_n}\;,
\end{align}
with the $q$-product notation
\begin{equation}
(t;q)_n=\prod_{j=0}^{n-1}(1-tq^j)\;.
\end{equation}
\end{theorem}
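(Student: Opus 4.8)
The plan is to prove both formulae by verifying that the claimed closed forms satisfy the recurrence~\eqref{recurrence} together with the normalisation $\c{k}{\ell}_0(q)=1$ and $\c{k}{\ell}_n(q)=0$ for $n<0$. For $n\ge1$ the factor $q^n-1$ is invertible as a rational function of $q$, so~\eqref{recurrence} determines $\c{k}{\ell}_n(q)$ uniquely from the lower-index coefficients; hence it suffices to read off the coefficient of $z^n$ in each of the two proposed series and check that it reproduces~\eqref{recurrence}. Writing $c_n$ for the relevant coefficient, the verification reduces in each case to a terminating $q$-series identity.

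For the $(k,k)$ case, put $K=k+1$. The sum in~\eqref{recurrence} collapses to the single term $j=k$, so the recurrence reads
\[
(q^n-1)\,\c{k}{k}_n(q)=q^{\binom{K}{2}+K(n-K)}\,\c{k}{k}_{n-K}(q)\;.
\]
Since only indices differing by $K$ are linked and $\c{k}{k}_0=1$, the coefficient vanishes unless $K\mid n$; writing $n=Km$, the $z^{Km}$-coefficient of the proposed series is $\c{k}{k}_{Km}(q)=\dfrac{(-1)^m q^{\binom{Km}{2}}}{(q^{K};q^{K})_m}$. I would verify this by forming the ratio of consecutive terms and matching it against the ratio forced by the recurrence; the comparison collapses to the elementary exponent identity $\binom{K}{2}-K^2=-\binom{K+1}{2}$, so this case is essentially immediate.

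For $(0,\infty)$ I would substitute $\c{0}{\infty}_n(q)=\dfrac{(-1)^n q^{n^2-n}}{(q;q)_n}$, reindex the sum by $i=j+1$, and divide through by $c_n$. After simplifying the $q$-exponents and the ratio $(q;q)_n/(q;q)_{n-i}=(q^{n-i+1};q)_i$, the recurrence becomes the finite identity
\[
\sum_{i=0}^{n}(-1)^i\, q^{\binom{i+1}{2}-in}\,\frac{(q;q)_n}{(q;q)_{n-i}}=q^n\;.
\]
Reversing the order of summation via $m=n-i$ converts the $q$-power to $\binom{m}{2}-\binom{n}{2}$ and, after pulling out the $m$-independent factors, isolates the cleaner equivalent statement
\[
\sum_{m=0}^{n}\frac{(-1)^m q^{\binom{m}{2}}}{(q;q)_m}=\frac{(-1)^n q^{\binom{n+1}{2}}}{(q;q)_n}\;.
\]
This last identity I would prove by induction on $n$: subtracting the $(n-1)$st instance from the $n$th reduces the claim to $q^{\binom{n}{2}}=q^{\binom{n+1}{2}}+q^{\binom{n}{2}}(1-q^n)$, which follows at once from $\binom{n+1}{2}=\binom{n}{2}+n$.

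The main obstacle is the $(0,\infty)$ case: recognising that the full convolution recurrence collapses to the standard terminating $q$-identity displayed above, and carrying the $q$-exponents accurately through the reindexing and summation reversal, which is where computational errors are most likely. By contrast, the $(k,k)$ case involves only a one-term recurrence and requires nothing beyond the single ratio check.
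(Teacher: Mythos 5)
Your verification-by-recurrence argument is correct and is essentially the paper's own (implicit) proof: the paper simply asserts that the recurrence \eqref{recurrence} can be solved explicitly in the cases $(k,k)$ and $(0,\infty)$, and your check — uniqueness of the solution given $\c{k}{\ell}_0(q)=1$, the one-term ratio check with the exponent identity $\binom{K}{2}-K^2=-\binom{K+1}{2}$ for $(k,k)$, and the reduction to the telescoping identity $\sum_{m=0}^{n}(-1)^m q^{\binom{m}{2}}/(q;q)_m=(-1)^n q^{\binom{n+1}{2}}/(q;q)_n$ for $(0,\infty)$ — is exactly the computation the paper leaves to the reader. All of your exponent manipulations check out.
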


Note that this even makes sense for $k=0$, where Euler's product formula implies that
\begin{equation}
\H{0}{0}(z,q)=\sum_{n=0}^\infty\frac{q^{\binom{n}2}(-z)^{n}}{(q;q)_n}=(z;q)_\infty
\end{equation}
and hence
\begin{equation}
\Rgf{0}{0}(z;a,q)=\left(1-a+a(z;q)_\infty/(qz;q)_\infty\right)^{-1}=\frac1{1-az}
\end{equation}
as trivially required.

More importantly, note that
\begin{equation}
\H{0}{\infty}(z,q)=\H{1}{1}(qz^2,q^2)
\end{equation}
which implies that there must be a bijection between $(0,\infty)$-restricted Lukaziewicz paths
and Dyck paths counted by length, contacts, and area. More precisely, this observation provides a 
generating function proof of the following theorem.

\begin{theorem}\label{thm:areabij}
There exists a bijection between $(0,\infty)$-restricted Lukaziewicz paths of length $n$ and 
area $m$ and Dyck paths of length $2n$ and area $2m+n$, which preserves the number of contacts.
\end{theorem}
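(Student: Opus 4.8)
The plan is to prove the statement at the level of generating functions, since \thmref{thm:qseries} and \thmref{thm_lukaeqs_q_linear} make the needed identity almost immediate, and then to extract the bijection from equality of coefficients. The three statistics in the theorem --- length $n\mapsto 2n$, area $m\mapsto 2m+n$, and contacts preserved --- are precisely encoded by the substitution $z\mapsto z^2q$, $q\mapsto q^2$ with $a$ held fixed: a Lukasiewicz path contributing $z^na^cq^m$ contributes $(z^2q)^na^c(q^2)^m=z^{2n}a^cq^{2m+n}$ on the Dyck side. Thus the theorem is equivalent to the single functional identity
\[
\Rgf{1}{1}(z;a,q)=\Rgf{0}{\infty}(z^2q;\,a,\,q^2),
\]
after which comparing the coefficient of $z^{2n}a^cq^{2m+n}$ on both sides shows that $(0,\infty)$-restricted Lukasiewicz paths of length $n$, area $m$ and $c$ contacts are equinumerous with Dyck paths of length $2n$, area $2m+n$ and $c$ contacts; choosing any bijection block-by-block in $(n,m,c)$ then yields the desired contact-preserving bijection.

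To prove the functional identity I would pass to the $\H{}{}$-functions. By \thmref{thm_lukaeqs_q_linear} the contact weight $a$ enters $\Rgf{k}{\ell}$ only through $\bigl(1-a+a\,\H{k}{\ell}(z;q)/\H{k}{\ell}(qz;q)\bigr)^{-1}$, identically on both sides, so it suffices to match the ratios $\H{}{}(z)/\H{}{}(qz)$. This reduces further to the clean unratioed identity
\[
\H{1}{1}(z;q)=\H{0}{\infty}(z^2q;q^2),
\]
because sending $z\mapsto qz$ on the left turns $z^2q$ into $z^2q^3$ on the right, so the two denominators agree automatically once the numerators do. I would then verify this last identity directly from \thmref{thm:qseries}: substituting $(z^2q,q^2)$ into the $\H{0}{\infty}$ series and simplifying with $\binom{2n}2=2n^2-n$ collapses it term-by-term onto the $\H{1}{1}$ series, both sides equalling $\sum_{n\ge0}(-1)^nq^{2n^2-n}z^{2n}/(q^2;q^2)_n$.

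On this route there is no real conceptual obstacle, only a bookkeeping one: I must keep the length-doubling, the area-doubling, and the extra shift $q^{+n}$ correctly aligned in the substitution $z\mapsto z^2q$, $q\mapsto q^2$, and confirm that $a$ factors through both sides in the same way before cancelling it. If an \emph{explicit} bijection is wanted rather than mere equinumerosity, I would instead specialise the hook-to-rise map of \thmref{thm_luka_rise}, which at $(k,\ell)=(0,\infty)$ already supplies a contact-preserving bijection onto all Dyck paths of length $2n$ (the rise bound $1\le j$ being vacuous): a jump-$j$ step becomes $U^{j+1}D$ and a down step becomes $D$. One then checks by summing vertex heights that the area transforms as $m\mapsto 2m+n$, i.e.\ that $\sum_{\text{jumps}}\bigl[(j_i+2)y_{i-1}+\binom{j_i+2}2+j_i\bigr]+\sum_{\text{downs}}y_i=2\sum_{i}y_i+n$, where $y_i$ are the Lukasiewicz vertex heights. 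Here the main obstacle is exactly this area computation; that the map is a contact-preserving, length-doubling bijection is already provided by \thmref{thm_luka_rise}.
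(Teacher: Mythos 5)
Your proposal is correct and follows essentially the paper's own two-pronged route: the paper first records precisely your generating-function identity (stated there as $\H{0}{\infty}(z,q)=\H{1}{1}(qz^2,q^2)$, with the roles of the two functions apparently transposed --- your orientation $\H{1}{1}(z;q)=\H{0}{\infty}(z^2q;q^2)$ is the one that actually follows from \thmref{thm:qseries} by the computation you describe) and notes that it already proves the theorem, and then gives an explicit bijection which is exactly the hook-to-rise map of \thmref{thm_luka_rise} at $(k,\ell)=(0,\infty)$, rephrased in terms of down-steps. The only place the paper is slicker is the area bookkeeping you flag as the main obstacle: instead of your step-by-step sum, it pairs each Lukasiewicz vertex of height $h$ with the two Dyck vertices at heights $h$ and $h+1$ heading its associated up/down-step pair, so the Dyck area is $\sum_{i=1}^n(2h_i+1)=2m+n$ immediately.
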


We now give an explicit bijective proof of this theorem. Recall that Dyck paths have an even number of steps, and the
difference of their vertex coordinates is even. Dyck paths are uniquely determined by their down steps.
 
Given a Dyck path $w_0w_1\ldots w_{2n}$, for each integer $j$ with $1\leq j\leq n$ there is a unique down-step 
starting at a vertex $w_{i_j}=(i_j,h_{i_j})$ with coordinate difference ${i_j}-h_{i_j}=2j$. This 
down-step gets mapped to $v_j=(j,h_{i_j}-1)$. Necessarily $h_{i_j}\geq1$, and the height between two subsequent
down-steps cannot decrease by more than one, i.e. $h_{i_{j+1}}-h_{i_j}\geq-1$. As there is a down-step starting
at $(2n-1,1)$, necessarily $v_n=(n,0)$. If we define $v_0=(0,0)$, the 
resulting path $v_0v_1\ldots v_n$ is therefore a Lukasiewicz path.

Conversely, given a Lukasiewicz path $v_0v_1\ldots v_n$, for each integer $i$ with $1\leq i\leq n$ we map the 
vertex $v_i=(i,h_i)$ to a down-step starting at the vertex $w_{2i+h_i-1}=(2i+h_i-1,h_i+1)$ (and the associated
up-step, which is defined implicitly). Subsequent down-steps are separated by precisely $h_{i+1}-h_i+1$ up-steps. 
$v_n=(n,0)$ implies that $w_{2n}=(2n,0)$, and if we define $w_0=(0,0)$ and augment with the intermediate 
up-steps, the resulting path $w_0w_1\ldots w_{2n}$ is a Dyck path.

Clearly both mappings are injective, and therefore also bijective. One can easily check that they are inverses
of each other.

There is a one-to-one mapping between vertices of height zero, and as a vertex with height $h$ of a Lukasiewicz 
path gets mapped to a pair of vertices of a Dyck path at height $h$ and $h+1$ (the starting vertices of an
associated pair of up/down-steps). Therefore, a Dyck path of area $m$ gets mapped to a Lukasiewicz path of area
$2m+n$.

Alternatively, this mapping can be visualised by considering the figure below. 
\begin{center}
		\includegraphics[scale=0.7]{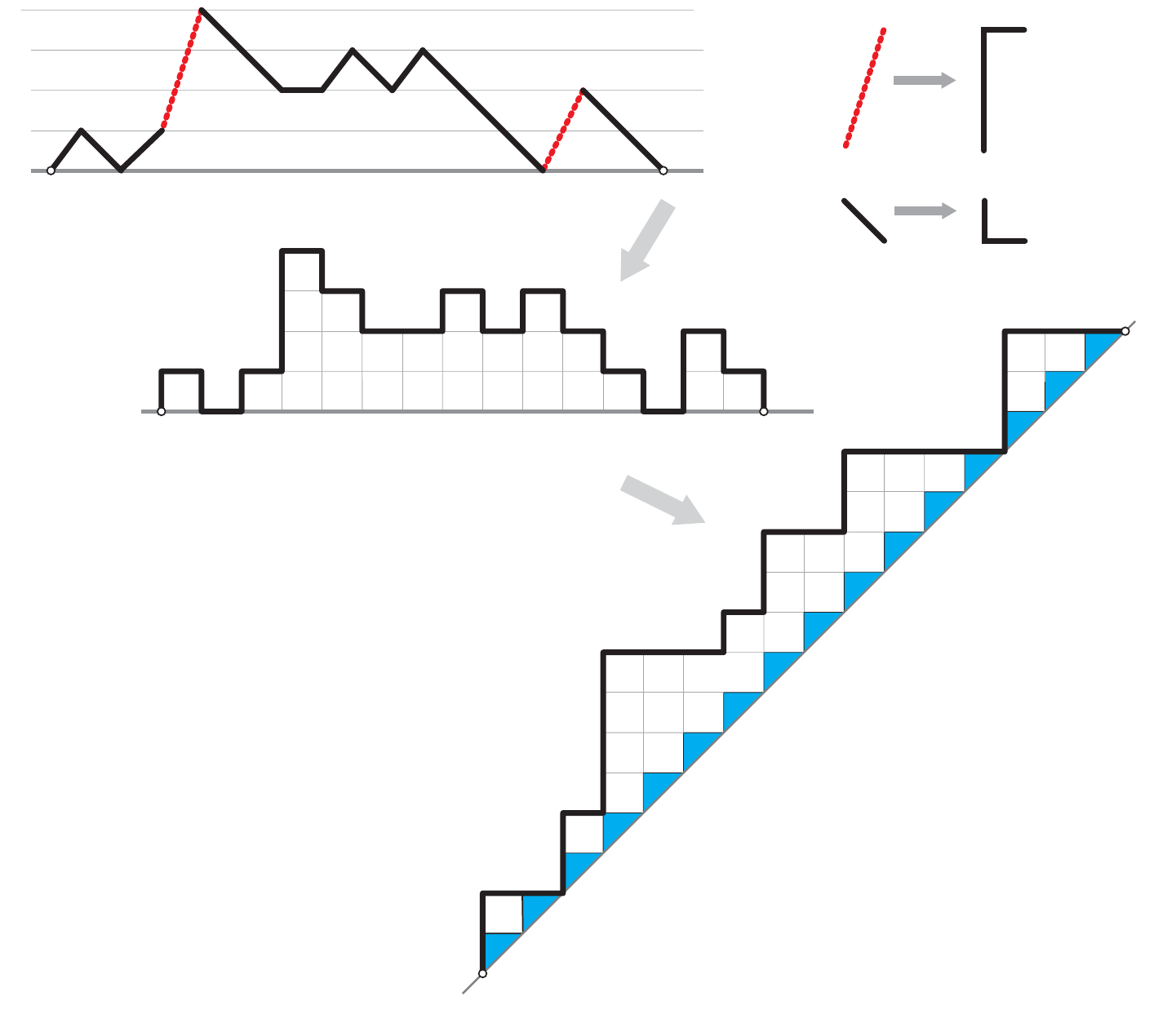}
\end{center}

Due to the bijection to Dyck paths, the associated phase diagram can be found in  \cite{Owczarek:2010lr}. 
For $0<q<1$ paths are bound to the surface, while for $q>1$ configurations with maximal area dominate the
ensemble irrespective of the value of $a$. Only for $q=1$ does there exist a genuine binding/unbinding transition
when varying $a$.

\section*{Acknowledgements} 

Financial support from the Australian Research Council  and The Centre of Excellence for the Mathematics and Statistics of Complex Systems (\mbox{MASCOS}) is gratefully
acknowledged.


\begin{thebibliography}{10}

\bibitem{dimarzio:1971ev}
E~A DiMarzio and R~J Rubin.
\newblock Adsorption of a chain polymer between two plates.
\newblock {\em J. Chem. Phys.}, 55:4318--4336, 1971.

\bibitem{rubin:1965ve}
R.~J. Rubin.
\newblock Random-walk model of chain-polymer adsorption at a surface.
\newblock {\em J. Chem. Phys.}, 43:2392, 1965.

\bibitem{forgacs:1991fj}
G~Forgacs and M~Semak.
\newblock Adsorption of a directed polymer chain in the presence of monomer
  attraction: analytical results.
\newblock {\em J. Phys. A: Math. Gen}, 24(14):L779--L784, 1991.

\bibitem{orlandini:2004kx}
E~Orlandini, C~Tesi, and S.~Whittington.
\newblock Adsorption of a directed polymer subject to an elongational force.
\newblock {\em J. Phys. A: Math. Gen.}, 37:1535--1543, 2004.

\bibitem{whittington:1998lr}
S~G Whittington.
\newblock A directed-walk model of copolymer adsorption.
\newblock {\em J. Phys. A: Math. Gen.}, 31(44):8797--8803, 1998.

\bibitem{brak:2007xe}
R~Brak, G~K Iliev, A~Rechnitzer, and S~G Whittington.
\newblock Motzkin path models of long chain polymers in slits.
\newblock {\em J. Phys. A: Math. Theor.}, 40:4415--4437, 2007.

\bibitem{prellberg:1995xg}
T.~Prellberg and R.~Brak.
\newblock Critical exponents from non-linear functional equations for partially
  directed cluster models.
\newblock {\em J. Stat. Phys.}, 78:701--730, 1995.

\bibitem{prellberg94c}
T.~Prellberg.
\newblock Uniform q-series asymptotics for staircase polygons.
\newblock {\em J. Phys. A}, 28(6):1289--1304, 1995.

\bibitem{hille:1972qy}
E.~Hille.
\newblock {\em Analytic Function Theory}, volume~2.
\newblock Chelsea Publishing Company, 1972.

\bibitem{moiseevich:1994qy}
Izrail~Moiseevich Gelfand, Mikhail~M. Kapranov, and Andrey~V. Zelevinsky.
\newblock {\em Discriminants, resultants, and multidimensional determinants}.
\newblock Mathematics: Theory and Applications. Birkh{\"a}user.

\bibitem{Owczarek:2010lr}
A.~L. Owczarek and T.~Prellberg.
\newblock A simple model of a vesicle drop in a confined geometry.
\newblock {\em J. Stat. Mech.: Theor. Exp}, P08015:13pp, 2010.

\end{thebibliography}
\end{document}